\documentclass{amsart}
\usepackage{amssymb, amsthm, amsmath, mathtools, color}

\usepackage[unicode=true,pdfusetitle,bookmarks=true,bookmarksnumbered=false,
bookmarksopen=false,breaklinks=true,pdfborder={0 0 0},
pdfborderstyle={},backref=false,colorlinks=true]{hyperref}
\definecolor{myblue}{rgb}{0.09,0.32,0.44} %
\hypersetup{pdfborder={0 0 0},pdfborderstyle={},colorlinks=true,linkcolor=myblue,citecolor=myblue,urlcolor=blue}

\newtheorem{thm}{Theorem}[section] %
\newtheorem*{thm*}{Theorem}

\newtheorem{cor}[thm]{Corollary}
\newtheorem{defn}[thm]{Definition}

\newtheorem{exmpl}[thm]{Example}

\newtheorem{lem}[thm]{Lemma}
\newtheorem{prop}[thm]{Proposition}

\theoremstyle{remark}
\newtheorem{rem}[thm]{Remark}

\newtheorem*{rem*}{Remark}
\newtheorem*{rems*}{Remarks}

\newcommand\Cref[1]{{Corollary~\ref{#1}}}

\newcommand\Lref[1]{{Lemma~\ref{#1}}}
\newcommand\Pref[1]{{Proposition~\ref{#1}}}

\newcommand\Tref[1]{{Theorem~\ref{#1}}}
\newcommand\Sref[1]{{Section \ref{#1}}}

\newcommand{\N}{\mathbb{N}}

\newcommand{\R}{\mathbb{R}}

\newcommand{\set}[1]{\left\{#1\right\}}
\newcommand{\sub}{\subseteq}
\newcommand{\E}{\mathbb{E}}
\newcommand{\eps}{\varepsilon}
\newcommand{\floor}[1]{\left\lfloor #1 \right\rfloor}
\newcommand{\ceil}[1]{\left\lceil #1 \right\rceil}

\newcommand{\sg}[1]{\left\langle #1\right\rangle}
\newcommand{\suchthat}{\,\ifnum\currentgrouptype=16\middle\fi|\,}

\renewcommand{\Pr}{\mathbb{P}}
\newcommand{\supp}{\operatorname{supp}}

\newcommand{\Hent}{\operatorname{H}}
\newcommand{\dent}{\operatorname{h}}
\newcommand{\ind}[1]{\boldsymbol{1}_{#1}}
\newcommand{\II}{\operatorname{I}}

\newcommand{\Aut}{\operatorname{Aut}}

\newcommand{\norm}[1]{\left\| #1\right\|}
\newcommand{\En}{\operatorname{E}}
\newcommand{\Bin}{\operatorname{Bin}}

\title{Small Entropy Doubling for Random Walks and Polynomial Growth}
\author{Guy Blachar}
\address{Universit\'{e} Paris-Dauphine -- CEREMADE, Place de Lattre de Tassigny, 75016 Paris, France}
\email{guy.blachar@gmail.com}

\begin{document}

\begin{abstract}
Gromov's theorem states that a finitely generated group has polynomial growth if and only if it is virtually nilpotent. A key ingredient in its proof is the small doubling property. In this work, we study entropy analogues of this property for random walks on groups. We show that if a finitely supported symmetric random walk $R_n$ satisfies
\[
\Hent(R_{2n}) \le \Hent(R_n) + \log K
\]
at some sufficiently large scale $n$, then the underlying group is virtually nilpotent, with bounds depending on $K$ and $\mu_{\min}$.

Our approach adapts Tao's entropy Balog--Szemerédi--Gowers argument to unimodular locally compact groups, combined with structural results on approximate groups.

As applications, we obtain entropy-based criteria for polynomial growth. We also deduce an entropy gap phenomenon: if $G$ is not virtually nilpotent, then the entropy of random walks on $G$ grows faster than a universal superlogarithmic function.
\end{abstract}

\maketitle

\section{Introduction}

Let $G$ be a finitely generated group, and let $S$ be a finite symmetric generating set of $G$. The \textbf{growth function} $\gamma_{G,S}(n)$ of $G$ with respect to $S$ counts the number of elements that can be written as a product of at most $n$ elements of $S$, namely the size of the ball of radius $n$ in the Cayley graph of $G$ with respect to $S$. It is well known that the asymptotic behaviour of $\gamma_{G,S}$ does not depend on the choice of the generating set $S$, and thus one can speak simply of the \textbf{growth rate} of the group $G$.

A particularly important class is that of groups of \textbf{polynomial growth}, namely groups for which $\gamma_{G,S}(n)\le Cn^d$ for some constants $C,d>0$. Gromov's celebrated theorem \cite{Gromov81} states that finitely generated groups of polynomial growth are precisely the virtually nilpotent groups. Shalom and Tao \cite{ShalomTao10} obtained a quantitative version of this result, showing that polynomial growth at a sufficiently large scale already forces virtual nilpotence. As a consequence, there is a ``gap'' in the space of possible growth functions: there exists a superpolynomial function $g(n)$ such that every finitely generated group that is not virtually nilpotent satisfies $\gamma_{G,S}(n)\ge g(n)$.

A key ingredient in Gromov's proof is showing that if $G$ has polynomial growth, then $S$ has \textbf{small doubling} in infinitely many scales, i.e., there is a constant $K\ge 1$ such that $\left|S^{2n}\right|\le K\left|S^n\right|$ for infinitely many values of $n$. Sets with small doubling were later studied using the language of approximate groups. The latter were classified by Breuillard, Green and Tao \cite{BreuillardGreenTao12}, providing several extensions of Gromov's theorem. Works of Breuillard and Tointon \cite{BreuillardTointon16}, and of Tessera and Tointon \cite{TesseraTointon25}, established quantitative one-scale versions of the small doubling property, showing that small doubling at a sufficiently large scale implies polynomial growth (and thus virtual nilpotence).
\medskip

In this paper we study probabilistic analogues of these results, replacing volume growth by the entropy growth of random walks. Let $\mu$ be a finitely supported, symmetric, generating probability measure on $G$. A \textbf{$\mu$-random walk} on $G$ is the process $R_n=X_1\cdots X_n$, where $X_1,X_2,\dots$ are i.i.d.\ $\mu$-distributed random variables. The law of $R_n$ is the $n$-fold convolution $\mu^{*n}$ of $\mu$ with itself. Random walks on groups were studied by Kesten \cite{Kesten59}, who gave a probabilistic characterization of amenability using the spectral radius of random walks on the group, and have since been used to study other geometric properties of groups.

One quantity of interest when studying random walks is their (Shannon) entropy, namely $\Hent(R_n)=\Hent(\mu^{*n})=-\sum_g\mu^{*n}(g)\log\mu^{*n}(g)$. The entropy of $R_n$ can be thought of as the ``effective support size'' of the walk, and is therefore a natural analogue of the growth function. For instance, it follows from a work of Coulhon and Saloff-Coste \cite{CoulhonSaloffCoste93} that a group is virtually nilpotent if and only if $\Hent(R_n)\le C\log n$ for some (and hence every) finitely supported symmetric random walk $R_n$ on $G$, providing an entropy version of Gromov's theorem.

\subsection{Main results}

Our main objective in this paper is to study an entropy analogue of the small doubling property for random walks. More precisely, we investigate situations in which $\Hent(R_{2n}) \le \Hent(R_n) + \log K$ for some constant $K\ge 1$. We formulate the inequality using $\log K$ rather than $K$ in order to reflect the fact that $\Hent(R_n)$ is in a logarithmic scale compared to the growth $\left|S^n\right|$. This property was studied by Tao for probability measures on discrete abelian groups \cite{Tao10}, and was recently utilized in the proof of Marton's conjecture (the ``polynomial Freiman--Ruzsa conjecture'') in abelian groups with bounded torsion by Gowers, Green, Manners and Tao \cite{GowersGreenMannersTao25,GowersGreenMannersTao26}.

Before stating the main results, we introduce some notations. For a probability measure $\pi$ on a set $X$, we write
\[
\pi_{\min} = \min\set{\pi(x)\suchthat\pi(x)>0}.
\]
We also write $O_K(1)$ for a quantity that depends only on $K$, and $O_{K,p}(1)$ for a quantity that depends only on $K,p$.

\begin{thm}\label{thm:main}
Let $K\ge 1$ and $0<p<1$ be real numbers. Then there exists $n_0 = n_0(K,p)\in\N$ such that the following holds: Let $G$ be a finitely generated group, and let $\mu$ be a finitely supported, symmetric, symmetric probability measure on ~$G$. If $\mu_{\min}\ge p$ and
\[
  \Hent(\mu^{*2n}) \le \Hent(\mu^{*n}) + \log K
\]
for some $n\ge n_0$, then there exists a subgroup $G_0\le G$ with $[G:G_0]=O_{K,p}(1)$, and a finite normal subgroup $H\vartriangleleft G_0$ with $\left|H\right|=O_K(1)$, such that $G_0/H$ is nilpotent of rank and class at most $O_K(1)$. In particular, $G$ is virtually nilpotent.
\end{thm}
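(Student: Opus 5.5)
Write $X=R_n$ and let $X'$ be an independent copy, so that $R_{2n}$ has the law of $XX'$. By symmetry of $\mu$, $X'^{-1}$ has the same law as $X'$. The hypothesis therefore reads $\Hent(XX')\le \Hent(X)+\log K$, which is a small entropy doubling statement for a single random variable on a (possibly nonabelian) group.

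\textbf{Step 1: entropy Balog--Szemer\'edi--Gowers.} I will use a nonabelian version of Tao's entropy Balog--Szemer\'edi--Gowers theorem, which should work on discrete groups (or more generally unimodular groups). It produces a finite $K^{O(1)}$-approximate group $A\subseteq G$ together with an element $x$ such that:
\begin{itemize}
\item[(i)] $\Pr(X\in xA)\ge K^{-O(1)}$;
\item[(ii)] $|A|\le K^{O(1)}e^{\Hent(X)}$.
\end{itemize}
I would follow Tao's argument: first pass to the Ruzsa distance $d(X,X')\le O(\log K)$, then use entropy versions of the Pl\"unnecke--Ruzsa inequalities to control $\Hent(XX'X'')$ and its relatives. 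The final step converts the entropy information into a set. I would take $A$ to be the set of elements $g$ on which the conditional distribution of $X^{-1}X'$ is large, using a weighted Balog--Szemer\'edi--Gowers argument.

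\textbf{Step 2: the approximate group is large.} Because the steps have weight at least $p$, the law of $R_n$ is spread out. I would show that $A$ is not tiny compared with the ball $S^{\sqrt n}$, where $S=\supp\mu$. Heuristically, $R_n$ is not concentrated: for every finite set $F$, $\Pr(R_n\in F)$ is small unless $|F|$ is large compared with the ``scale'' of the walk.

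Concretely, I would use that $\mu^{*n}(g)\le \mu^{*2\lfloor n/2\rfloor}(e)$ for symmetric walks. I would combine this with the fact that $A^{O(1)}$ is an approximate group with positive walk mass, and with the containment $xA\subseteq S^n$. Together these should show that $A^{4}$ contains a large ball $S^{m}$, or at least absorbs many steps of the walk. A cleaner route is to apply Breuillard--Green--Tao to $A$: this yields a finite normal $H$ and a nilprogression $P$, with $H\subseteq A^4$, $P$ contained in $A^4$ modulo $H$, and $|A|\le O_K(|P|)$. By Step 1, the walk then spends probability at least $K^{-O(1)}$ in a bounded number of translates of $HP$.

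\textbf{Step 3: conclude via one-scale Gromov.} Next I would show that the walk cannot concentrate on a bounded number of translates of a subgroup-like object unless that object contains a large ball. If some generator direction escaped the coset structure, then each step (of weight at least $p$) would leave it with probability bounded below, and the mass would disperse. This would give $S^{m}\subseteq (xA)^{-1}(xA)\cdot O(1)$ for some $m\ge n_0'(K,p)$, hence $|S^{2m}|\le O_{K,p}(1)|S^m|$. At that point the Breuillard--Tointon theorem, or Tessera--Tointon's one-scale result, applied at scale $m$ gives the stated structure: index $O_{K,p}(1)$, $|H|=O_K(1)$, rank and class $O_K(1)$. Here $n_0$ is chosen so that $m$ exceeds the threshold in those theorems.

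\textbf{Main obstacle.} I expect the hardest part to be Step 2/3: passing from ``the walk puts mass $K^{-O(1)}$ on an approximate group of size $e^{\Hent(X)}$'' to small doubling of an honest word-metric ball. This is where $\mu_{\min}\ge p$ and $n\ge n_0$ enter. As a first attempt I would compare $\Pr(R_n\in xA)$ with $\Pr(R_{n+k}\in xA^{3})$ for $k\le$ some large constant. Positive weights should force $S^kA\subseteq A^{O(1)}$, giving a ball $S^k$ inside a set of controlled doubling. I would then apply the one-scale Gromov results at scale $k$, with $k$ large but depending only on $K$ and $p$.
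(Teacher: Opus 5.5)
\textbf{The gap: word-ball doubling.} Your Step~1 produces the same object as the paper's Proposition~\ref{prop:ent-BSG}. The paper reaches it via density level sets, large multiplicative energy, and Tao's non-commutative BSG rather than Pl\"unnecke-type estimates, and pigeonholing gives your single translate $xA$. The gap is in Steps~2--3, at the passage to word-ball doubling.

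\begin{itemize}
\item A containment such as $S^m\subseteq (xA)^{-1}(xA)\cdot O(1)$ or $S^kA\subseteq A^{O(1)}$ bounds $|S^m|$ above by $O(|A|)$. It gives no lower bound on $|S^m|$, so $|S^{2m}|/|S^m|$ is uncontrolled. A ball sitting inside a set of controlled doubling says nothing about the doubling of the ball.
\item At radius $k=O_{K,p}(1)$ the bound $|S^{2k}|\le |S|^k|S^k|\le p^{-k}|S^k|$ holds for every group. So it carries no information and can never clear the Breuillard--Tointon threshold, which grows with the doubling constant.
\item A concrete obstruction: take a finite group $G$ with a fixed expanding generating set (e.g.\ $\mathrm{SL}_2(\mathbb{F}_q)$), a lazy symmetric $\mu$, and $n$ a large multiple of $\log|G|$. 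The walk has mixed, so the hypothesis holds with $K\approx 1$, and $A=G$ satisfies everything in Step~1. Yet balls of radius $\le c\log|G|$, in particular of radius $\sqrt n$ or $O_{K,p}(1)$, have doubling exponential in the radius. Bounded ball doubling only appears at radii comparable to $\log|G|\asymp n$, and none of your mechanisms locate such a radius.
\end{itemize}

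So the ``mass would disperse unless the object contains a large ball'' step is not just unproved; it is aimed at the wrong target.

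\textbf{The missing idea: avoid word balls entirely.} The paper argues as follows.
\begin{itemize}
\item Apply the subgroup form of Breuillard--Green--Tao \cite[Theorem~1.6]{BreuillardGreenTao12} to $A$. This gives a subgroup $G_0$ and a finite $N\vartriangleleft G_0$ with $G_0/N$ nilpotent of rank and step $O_K(1)$, and $A$ is covered by $O_K(1)$ left cosets of $G_0$. Hence $\mu^{*n}(xG_0)\ge\eps(K)$ for some $x$.
\item Lemma~\ref{lem:make-n-even} (Cauchy--Schwarz for the self-adjoint Markov operator on $\ell^2(G/G_0)$) converts this into $\mu^{*2k}(G_0)\ge\eps(K)$, where $2k$ is the largest even number $\le n$. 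Your inequality $\mu^{*n}(g)\le\mu^{*2\lfloor n/2\rfloor}(e)$ is the case $G_0=\set{e}$.
\item Tointon's theorem that random walks measure subgroup index uniformly \cite[Theorem~1.11]{Tointon20}, with $(\mu*\mu)_{\min}\ge p^2$ and $k$ large in terms of $K,p$, then gives $[G:G_0]=O_{K,p}(1)$.
\end{itemize}
That theorem is the rigorous form of your dispersal intuition. It applies to honest subgroups, not to nilprogressions, which is why the subgroup form of BGT is the one to use. This route handles finite groups and large finite normal subgroups with no extra work.

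\textbf{The bound $|H|=O_K(1)$.} Breuillard--Tointon do not supply your final claim $|H|=O_K(1)$. They only place the finite normal subgroup inside a bounded power of a ball, just as BGT in the paper only gives $N\subseteq A^4$. In fact this bound cannot hold as stated. Take $G=A_m$ ($m\ge5$) with $\mu$ uniform on $\set{1,\sigma^{\pm1},\tau^{\pm1}}$, where $\sigma,\tau$ generate $A_m$, and $n$ past the mixing time. The hypothesis then holds with $K=2$ and $p=1/5$. But every subgroup of index $<m$ is $A_m$ itself, which forces $H=A_m$. Both your route and the paper's yield a finite normal subgroup of uncontrolled size, which still suffices for virtual nilpotence.
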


\begin{rem}
The dependence of $n_0$ and $[G:G_0]$ on $\mu_{\min}$ in the theorem is necessary. Indeed, let $G=\sg{a,b}$ be a $2$-generated group. Consider the probability measure $\mu_\eps=(1-\eps)\delta_1+\eps\nu$ on $G$, where $\nu$ is uniform on $\{a^{\pm 1},b^{\pm 1}\}$, and let $R_n^{(\eps)}$ be a $\mu_{\eps}$-random walk. Writing $M_n$ for the number of $\nu$-steps the walk $R_n^{(\eps)}$ made, we have $M_n\sim\Bin(n,\eps)$, and thus
\[
\Hent(R_n^{(\eps)}) \le \Hent(M_n) + \E[\Hent(\nu^{*M_n})] \le \Hent(M_n) + \E[M_n\log 4] = \Hent(M_n) + n\eps\log 4
\]
where the second inequality holds since $\nu$ is supported on (at most) $4$ elements. It follows that for every $n\ge 1$,
\[
\Hent(R_n^{(\eps)})\xrightarrow{\eps\to 0}0
\]
uniformly over all of the $2$-generated groups, so
\[
\Hent(R_{2n}^{(\eps)}) - \Hent(R_n^{(\eps)})\xrightarrow{\eps\to 0}0
\]
uniformly as well. We therefore see:
\begin{enumerate}
  \item Fix $K\ge 1$, and take $G=\mathrm{F}_2$ to be the free group on $2$ generators. For any given $n_0\ge 1$, by choosing $\eps$ small enough we can ensure that
      \[
      \Hent(R_{2n_0}^{(\eps)}) \le \Hent(R_{n_0}^{(\eps)}) + \log K.
      \]
      Since $\mathrm{F}_2$ is not virtually nilpotent, the conclusion of \Tref{thm:main} cannot hold, and thus $n_0$ cannot depend solely on $K$.
  \item Fix $K\ge 1$ and $n_0\ge 1$, and take $G=A_m$ the alternating group. We may again choose $\eps$ small enough so that
      \[
      \Hent(R_{2n_0}^{(\eps)}) \le \Hent(R_{n_0}^{(\eps)}) + \log K.
      \]
      The alternating groups $A_m$ are not $O_K(1)$-by-nilpotent-by-$O_K(1)$, and thus the index $[G:G_0]$ cannot be a function of $K$ alone.
\end{enumerate}
\end{rem}

The result can also be stated for vertex-transitive graphs (we assume that all graphs are simple, undirected and connected).

\begin{thm}\label{thm:main-graphs}
Let $K,D\ge 1$ be real numbers. Then there exists $n_0 = n_0(K,D)\in\N$ such that the following holds: For any locally finite vertex-transitive graph $\Gamma$ with degree at most $D$, if the simple random walk $R_n$ on $\Gamma$ satisfies
\[
\Hent(R_{2n}) \le \Hent(R_n) + \log K
\]
for some $n\ge n_0$, then $\Gamma$ has polynomial growth.
\end{thm}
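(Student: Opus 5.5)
The plan is to reduce \Tref{thm:main-graphs} to the locally compact version of the argument behind \Tref{thm:main}, by lifting the simple random walk to the automorphism group. Fix a root $o\in\Gamma$, let $G=\Aut(\Gamma)$ with the topology of pointwise convergence, and let $U=\mathrm{Stab}_G(o)$. Then $G$ is a totally disconnected locally compact group, $U$ is compact open, and $\Gamma$ is identified with $G/U$. Normalize the Haar measure $m$ so that $m(U)=1$. Let $S=\set{g\in G\suchthat d(go,o)=1}$, which is a compact open bi-$U$-invariant set, and let $\mu=\ind{S}\,dm/m(S)$. Then $m(S)=\deg(o)\le D$, and the image of $\mu^{*n}$ under $g\mapsto go$ is the law of $R_n$. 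Since $\mu^{*n}$ is bi-$U$-invariant, its density is constant on left $U$-cosets $gU$, each of which has measure $1$. Hence the differential entropy $\dent(\mu^{*n})=-\int f_n\log f_n\,dm$ equals $\Hent(R_n)$. The hypothesis therefore becomes
\[
\dent(\mu^{*2n})\le\dent(\mu^{*n})+\log K,
\]
with the density of $\mu$ bounded below by $1/D$ on its support. This plays the role of the assumption $\mu_{\min}\ge p$.

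The intended core step is a version of Tao's entropy Balog--Szemer\'edi--Gowers argument for unimodular locally compact groups, which the abstract announces. Its output should be a compact open $O_K(1)$-approximate subgroup $A\sub G$ with the following properties:
\begin{enumerate}
\item $\log m(A)=\dent(\mu^{*n})+O_K(1)$;
\item a translate of $A$ carries a proportion $\gg_K 1$ of the mass of $\mu^{*n}$;
\item $A$ is contained in $S^{Cn}$ for some $C=O_K(1)$.
\end{enumerate}
Property (3) holds after intersecting with a large power of $S$ at no cost, since $\mu^{*n}$ is supported on $S^n$. Next, $\mu^{*n}$ is spread out at scale $\sqrt n$, which follows from a diffusive lower bound of Varopoulos--Carne type in the bounded-degree setting. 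Using this, I will show that $A^{O_K(1)}$ contains the ball $S^{r}$ with $r\ge c_{K,D}\sqrt n$. I would then apply the Breuillard--Green--Tao structure theorem in its locally compact form, combined with Tessera--Tointon's one-scale criterion. Their statement is that if a ball of radius $r\ge r_0(K,D)$ in a transitive graph is covered by $O_K(1)$ translates of an approximate group, then $\Gamma$ has polynomial growth. Choosing $n_0$ so that $c_{K,D}\sqrt{n_0}\ge r_0$ then completes the argument.

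The main obstacle is unimodularity. The entropy identity and the symmetrization steps in Tao's argument use $m(gA)=m(Ag)=m(A)$ and the fact that inverting $\mu$ preserves it. If $G$ is not unimodular, it is known (Soardi--Woess) that $\Gamma$ is nonamenable, so it does not have polynomial growth. In that case I must show that the hypothesis fails once $n\ge n_0(K,D)$. The first approach I would try is to exploit the modular function $\Delta$. Along the walk, $\log\Delta(R_n)$, suitably interpreted via $\log(|U_{R_no}|/|U_{o}\cdot R_n o|)$-type coset ratios, behaves like a nondegenerate one-dimensional walk with step size bounded in terms of $D$. Its entropy alone should increase by about $\tfrac12\log 2$ between $n$ and $2n$. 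That gain is not enough to contradict the hypothesis directly. Instead, I would restrict to the unimodular closed subgroup $\ker\Delta$ and rerun the argument there, conditioning on the value of $\log\Delta$; the remaining entropy increment must then be $O_K(1)$. This forces $\ker\Delta$, and hence $\Gamma$, to be amenable, which contradicts nonunimodularity. If this conditioning fails, the fallback is to prove directly that nonunimodular transitive graphs of degree at most $D$ satisfy $\Hent(R_{2n})-\Hent(R_n)\to\infty$ uniformly, using uniform spectral gap bounds relative to $\Delta$.
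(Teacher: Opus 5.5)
\textbf{Main gap: the non-unimodular case.} Your lift to $G=\Aut(\Gamma)$ and the identification of Haar entropy with $\Hent(R_n)$ match the paper's unimodular case, but the non-unimodular case, which the theorem also covers, is not proved. Your primary plan aims at a contradiction that does not exist. Even granting that conditioning on $\log\Delta$ lets you rerun an entropy Balog--Szemer\'edi--Gowers argument, the inference ``$\ker\Delta$, and hence $\Gamma$, is amenable'' is false. (That rerun is itself doubtful: the conditioned laws are not convolution powers of a symmetric measure on $\ker\Delta$, so the proposition does not apply as stated.) By Soardi--Woess, $\Gamma$ is amenable if and only if $\Aut(\Gamma)$ is amenable \emph{and} unimodular. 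So amenability of $\ker\Delta$, or even of all of $\Aut(\Gamma)$ (as for the grandparent graph), is fully compatible with nonunimodularity.

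What you must show is that the hypothesis itself fails for $n\ge n_0(K,D)$. That is your fallback, and it is the whole content of this case, but you only name it. The missing ingredient is a spectral radius bound depending only on the degree. The paper obtains it as follows.
\begin{enumerate}
\item Split the adjacency operator as $A=\sum_i A_i$ according to the $G_o$-orbits of the neighbours $v_i$ of $o$. Then $A_i$ has out-degree $a_i=|G_ov_i|$ and in-degree $b_i=|G_{v_i}o|$, so Schur's test gives $\norm{A_i}\le\sqrt{a_ib_i}$.
\item Nonunimodularity forces $\vec a\ne\vec b$, while $\sum_i a_i=\sum_i b_i=d$. An elementary concavity argument then gives $\sum_i\sqrt{a_ib_i}\le\sqrt{d^2-1}$, hence $\rho(P)\le\sqrt{1-1/d^2}$ and $h\ge-\log\rho(P)\ge 1/(2d^2)$.
\item The increments $\Hent(R_i)-\Hent(R_{i-1})=\II(R_i;R_1)$ are nonincreasing with limit $h$. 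Hence $\Hent(R_{2n})-\Hent(R_n)\ge n/(2d^2)$, which exceeds $\log K$ as soon as $n>2D^2\log K$.
\end{enumerate}

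\textbf{Second gap: the ball step in the unimodular case.} Varopoulos--Carne is a Gaussian \emph{upper} bound on $p_n(x,y)$. It gives no lower bound on how spread out $\mu^{*n}$ is. Even a displacement lower bound would not place an entire ball of radius $c\sqrt n$ inside $A^{O_K(1)}$. Separately, intersecting $A$ with $S^{Cn}$ does not preserve the approximate-group property in general, though you never actually use (3).

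The paper avoids balls altogether:
\begin{enumerate}
\item Breuillard--Green--Tao gives an open subgroup $G_0$ with $\mu^{*n}(xG_0)\ge\eps(K)$.
\item Self-adjointness of the induced walk on $G/G_0$ upgrades this to $\mu^{*2k}(G_0)\ge\eps(K)$ for the largest even $2k\le n$.
\item Tointon's theorem that a reversible walk, run long enough, cannot give mass $\eps$ to a subgroup of large index then bounds $[G:G_0]$. This is exactly where $n\ge n_0$ enters.
\end{enumerate}

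If you want to keep your covering route, the near-invariance must come from the hypothesis, not from heat-kernel bounds. Monotonicity of the increments gives $\II(R_{2n+1};R_1)\le(\log K)/n$. By Pinsker, $\mu^{*2n}$ is then nearly invariant under left translation by $t^{-1}s$ with $s,t\in S$. Together with $\mu^{*2n}(A^2)\ge c^2$ and a Ruzsa covering argument, this covers a ball of radius $\gg_{K,D}\sqrt n$ (for the generating set $S^{-1}S$) by $O_K(1)$ translates of $A^4$. Breuillard--Green--Tao and connectivity of the Schreier graph on the cosets of $G_0$ then bound the index.
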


It would be very interesting to find quantitative estimates on the implied constants in the above theorems. However, our techniques do not provide such estimates.

\subsection{Applications}

We present some applications of our main results to the study of entropy of random walks. We formulate the applications for random walks on groups, though they also hold for vertex-transitive graphs.

The first application provides a one-scale version of Gromov's theorem in the language of entropy:

\begin{cor}\label{cor:app-gromov}
Let $C>0$ and $0<p<1$ be real numbers. Then there exists $n_0 = n_0(C,p)\in\N$ such that the following holds: For any finitely generated group~$G$ and any finitely supported, symmetric, generating probability measure $\mu$ on $G$, if $\mu_{\min}\ge p$ and
\[
\Hent(\mu^{*n}) \le C\log n
\]
for some $n\ge n_0$, then the conclusion of \Tref{thm:main} holds.
\end{cor}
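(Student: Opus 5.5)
The idea is to reduce \Cref{cor:app-gromov} to \Tref{thm:main} by pigeonholing over dyadic scales. Monotonicity of entropy along the walk then turns a bound $\Hent(\mu^{*n})\le C\log n$ at one large scale into a doubling inequality $\Hent(\mu^{*2m})\le\Hent(\mu^{*m})+\log K$ at some still-large scale $m\le n$, with $K$ depending only on $C$.

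First, recall that $k\mapsto\Hent(\mu^{*k})$ is nondecreasing. Indeed, $R_{k+1}=R_k X_{k+1}$ with $X_{k+1}$ independent of $R_k$, so
\[
\Hent(R_{k+1})\ge\Hent(R_{k+1}\mid X_{k+1})=\Hent(R_k).
\]
Set $K=e^{2C}$ and let $n_1=n_0(K,p)$ be the threshold from \Tref{thm:main}. Take $n\ge n_0(C,p)$, where $n_0(C,p)$ is to be chosen, and write $L=\floor{\log_2 n}$. Consider the scales $m_j=2^j$ for $j_0\le j<L$, where $j_0=\ceil{\log_2 n_1}$.

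Suppose that at every one of these scales
\[
\Hent(\mu^{*2m_j})>\Hent(\mu^{*m_j})+\log K .
\]
Telescoping, and using $\Hent(\mu^{*m_{j_0}})\ge 0$ together with monotonicity ($2^L\le n$), we get
\[
C\log n\ge\Hent(\mu^{*n})\ge\Hent(\mu^{*2^L})>(L-j_0)\log K=2C(L-j_0).
\]
Since $L\ge\log_2 n-1$, the right-hand side is at least $2C(\log_2 n-1-j_0)$. This is larger than $C\log n$ as soon as $\log_2 n\ge 2(1+j_0)/(2-\log 2)$, because $\log n=\log 2\cdot\log_2 n$ and $\log 2<2$. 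So we choose $n_0(C,p)$ so that this holds, which depends only on $C$ and $p$ through $n_1$. The assumption then fails, and there is some $m=m_j\ge n_1$ with $\Hent(\mu^{*2m})\le\Hent(\mu^{*m})+\log K$. Applying \Tref{thm:main} at scale $m$ with $K=e^{2C}$ gives the conclusion, with all bounds depending only on $C$ and $p$.

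There is no real obstacle. The only points needing care are the monotonicity of entropy, which uses the independence of the increments, and choosing $n_0(C,p)$ so that enough dyadic scales lie above $n_1$. If the symmetric-measure hypothesis of \Tref{thm:main} differs slightly from the present one (generating versus symmetric), one should check that $\mu$ satisfies what that theorem requires; here $\mu$ is symmetric and generating, so it does.
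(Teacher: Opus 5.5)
Your proposal is correct and follows essentially the paper's argument: telescope $\Hent(\mu^{*k})$ along a dyadic chain of scales below $n$, use monotonicity and nonnegativity of the entropy, and pigeonhole to find a scale above the threshold of \Tref{thm:main} whose doubling constant is $e^{O(C)}$. The only difference is the base of the chain. The paper starts at $\floor{\sqrt n}$, so the base scale is automatically large once $n>n_0^2$, and gets $\log K=4C\log 2$ (i.e.\ $K=2^{4C}$). You start at a fixed scale $2^{j_0}\ge n_0(K,p)$ and get $K=e^{2C}$.
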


We remark that this corollary can also be deduced from a result of Tao \cite[Theorem 1.16]{Tao17}. However, it follows easily from our main results, so we include it here.

As mentioned above, the work of Shalom and Tao \cite{ShalomTao10} demonstrates the existence of a ``gap'' in the space of growth functions of groups. We prove that such a gap exists also for entropies of random walks:

\begin{cor}\label{cor:gap}
Fix $0<p<1$. Then there exists a function $f_p\colon\N\to(0,\infty)$ satisfying
\[
\lim_{n \to \infty} \frac{f_p(n)}{\log n} = \infty,
\]
such that for any non-virtually nilpotent group $G$ and any finitely supported, symmetric, generating probability measure $\mu$ on $G$ with $\mu_{\min} \ge p$, we have $\Hent(\mu^{*n}) \ge f_p(n)$.
\end{cor}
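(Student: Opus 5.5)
We deduce Corollary~\ref{cor:gap} from Corollary~\ref{cor:app-gromov} by the standard diagonal argument used to get gap results from one-scale theorems. For each integer $C\ge 1$, Corollary~\ref{cor:app-gromov} gives a threshold $n_0(C,p)$. We may assume that the thresholds $N_C:=n_0(C,p)$ are strictly increasing in $C$: replace $N_C$ by $\max_{C'\le C} n_0(C',p)+C$. Since any $n\ge N_C$ is at least $n_0(C,p)$, the conclusion of Corollary~\ref{cor:app-gromov} still applies at every such scale. Define
\[
f_p(n) = \begin{cases} c_0, & n< N_1,\\ C\log n, & N_C\le n< N_{C+1},\end{cases}
\]
where $c_0>0$ is a constant handling small $n$. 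Since $N_C\to\infty$, for any fixed $M$ we have $f_p(n)\ge M\log n$ once $n\ge N_M$. Hence $f_p(n)/\log n\to\infty$.

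Now let $G$ be non-virtually nilpotent and $\mu$ as in the statement, with $\mu_{\min}\ge p$. Suppose $N_C\le n<N_{C+1}$. If $\Hent(\mu^{*n})< C\log n$, then Corollary~\ref{cor:app-gromov}, applied with the constant $C$ at the scale $n\ge n_0(C,p)$, shows that $G$ is virtually nilpotent, a contradiction. So $\Hent(\mu^{*n})\ge C\log n=f_p(n)$. For a strict inequality one can replace $C$ by $C+1$ in the construction, which costs nothing.

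It remains to treat $n<N_1$, where we need a uniform positive lower bound. Since $G$ is not virtually nilpotent, it is infinite and nontrivial. Because $\mu$ is generating, $\mu$ is not $\delta_1$. Symmetry then forces the support of $\mu^{*n}$ to contain at least two points for $n\ge1$. The function $f_p$ maps $\N$ into $(0,\infty)$, so we must bound $\Hent(\mu^{*n})$ from below by a positive constant depending only on $p$. Every nonzero atom of $\mu^{*n}$ is at least $p^n$, since it is a sum of products of $n$ atoms of $\mu$. Also no atom of $\mu^{*n}$ exceeds $1-p$, because its support contains at least two points and each carries mass at least $p^n$. So on the finite range $n<N_1$ we may take
\[
c_0=\min_{n<N_1} \bigl(-p^n\log p^n\bigr)>0,
\]
or more simply use the bound $\Hent\ge -\log\max_g \mu^{*n}(g)\ge -\log(1-p^{n})$. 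Either choice is positive and depends only on $p$ and $N_1$. For $n=0$, if $\N$ includes $0$, we just set $f_p(0)$ to be any positive value, since $\Hent(\mu^{*0})=0$ makes that case degenerate. We would exclude it, or note that $\N$ starts at $1$.

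None of these steps is a real obstacle once Corollary~\ref{cor:app-gromov} is available. The only care needed is making the thresholds monotone so that $f_p$ is well defined and superlogarithmic, and ensuring positivity of $f_p$ for small $n$ uniformly in $G$ and $\mu$, which the $\mu_{\min}\ge p$ bound handles.
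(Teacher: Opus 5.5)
Your proposal is correct and follows the paper's own diagonal argument: the paper sets $f_p(n)=j\log n$ on $m_j\le n<m_{j+1}$, with $m_j=n_0(j,p)$ from \Cref{cor:app-gromov} taken nondecreasing, and uses the contrapositive of that corollary at each scale. Your extra care fills small gaps the paper glosses over. Forcing the $N_C$ to be strictly increasing guarantees $N_C\to\infty$, and the positive constant $c_0$ for $n<N_1$ fixes a real defect, since the paper's $f_p$ equals $0$ there. One small slip: your stated reason only shows that atoms of $\mu^{*n}$ are at most $1-p^n$, not $1-p$. The bound $1-p$ is nonetheless true, because $\max_g\mu^{*n}(g)\le\max_g\mu(g)$. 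Your bound $-\log(1-p^{n})$ only needs $1-p^n$ anyway.
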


Finally, while the main result compares the random walk after $n$ and $2n$ steps, we can also provide a similar result comparing the walk after $n$ and $(1+\eps)n$ steps. A similar statement for growth is still open (see \cite[Remark 2.5]{BreuillardTointon16} and \cite[Conjecture 1.5]{TesseraTointon25}).

\begin{cor}\label{cor:app-epsilon}
Let $K\ge 1$, $0<p<1$, and $\eps>0$ be real numbers. Then there exists $n_0 = n_0(K,p,\eps)\in\N$ such that the following holds: For any finitely generated group~$G$ and any finitely supported, symmetric, generating probability measure $\mu$ on $G$, if $\mu_{\min}\ge p$ and
\[
\Hent(\mu^{*\ceil{(1+\eps)n}}) \le \Hent(\mu^{*n}) + \log K
\]
for some $n\ge n_0$, then there exists a subgroup $G_0\le G$ with $[G:G_0]=O_{K,p,\eps}(1)$, and a finite normal subgroup $H\vartriangleleft G_0$ with $\left|H\right|=O_{K,\eps}(1)$, such that $G_0/H$ is nilpotent of rank and class at most $O_{K,\eps}(1)$.
\end{cor}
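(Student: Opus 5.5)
Set $h_m=\Hent(\mu^{*m})$. The plan is to reduce to \Tref{thm:main} by showing that small growth of $h_m$ from $n$ to $\ceil{(1+\eps)n}$ forces small doubling at some scale $m \ge n$ with $m$ comparable to $n$. The key input is concavity-type behaviour of $h_m$: by the submodularity inequality for entropy (Kaimanovich--Vershik style, using the Markov chain $R_{m} \to R_{m+1} \to R_{m+2}$ with i.i.d.\ increments), the increments $h_{m+1}-h_m$ are non-increasing in $m$. Indeed, $\Hent(R_{m+2}) + \Hent(R_m) \le \Hent(R_{m+1}) + \Hent(R_{m+1}')$, where $R'_{m+1} = X_2\cdots X_{m+2}$, by submodularity applied to $(R_m, R_{m+1}, R_{m+2})$ after writing things in terms of increments. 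I will take this monotonicity of increments as the first step and verify it carefully; it is standard for random walks on groups.

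Given monotone increments $\Delta_m = h_{m+1}-h_m \ge 0$, set $L=\ceil{(1+\eps)n}-n \ge \eps n$. Then
\[
L\,\Delta_{\ceil{(1+\eps)n}} \le \sum_{j=n}^{\ceil{(1+\eps)n}-1}\Delta_j \le \log K .
\]
Let $k=\ceil{1/\eps}$ and consider a geometric chain of scales $n\le m$ with $2m \le$ some multiple of $n$. More directly, choose $m=\ceil{(1+\eps)n}$ itself. By monotonicity, each increment beyond $m$ is at most $\log K/(\eps n)$, so
\[
h_{2m}-h_m \le m\cdot\frac{\log K}{\eps n} \le \frac{(1+\eps)n+1}{\eps n}\log K \le \frac{3}{\eps}\log K
\]
for $n\ge 1$ and $\eps\le1$; larger $\eps$ is handled by replacing $\eps$ with $\min(\eps,1)$, since the hypothesis only weakens. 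Hence $h_{2m}\le h_m+\log K'$ with $K'=K^{3/\eps}$.

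Now apply \Tref{thm:main} with parameters $K'$ and $p$ at scale $m\ge n\ge n_0(K',p)$. Setting $n_0(K,p,\eps)=n_0(K^{3/\eps},p)$, this gives $G_0$ of index $O_{K',p}(1)=O_{K,p,\eps}(1)$ and $H$ of size $O_{K'}(1)=O_{K,\eps}(1)$, with $G_0/H$ nilpotent of rank and class $O_{K,\eps}(1)$. That is exactly the claim.

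The main obstacle is the first step, the monotonicity of $\Delta_m$. I would prove it via $\Hent(R_{m+1})-\Hent(R_m)=\II(R_{m+1};X_1)$ type identities. Writing $h_{m+1}-h_m = \Hent(R_{m+1}) - \Hent(R_{m+1}\mid X_1)$, this uses that $R_{m+1}$ given $X_1$ is a translate of an $m$-step walk. The right-hand side is $\II(X_1;R_{m+1})$, which is non-increasing in $m$ by data processing, since $X_1\to R_{m+1}\to R_{m+2}$ is a Markov chain. This requires only finiteness of entropies, which holds since $\mu$ is finitely supported.
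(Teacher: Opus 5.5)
Your proof is correct and follows the paper's route. You use concavity of $m\mapsto\Hent(\mu^{*m})$ to turn the $(1+\eps)$-hypothesis into small entropy doubling with constant $K^{O(1/\eps)}$ and then apply \Tref{thm:main}; you derive concavity from $\Hent(R_{m+1})-\Hent(R_m)=\II(X_1;R_{m+1})$ and data processing, as the paper does in \Cref{cor:non-uni-diffent-bound}, where the paper here cites Kaimanovich--Vershik. The differences are cosmetic: the paper bounds $\Hent(\mu^{*2n})-\Hent(\mu^{*n})\le\frac{1}{\eps}\log K$ at scale $n$ rather than $\ceil{(1+\eps)n}$, and your reduction to $\eps\le 1$ via monotonicity of $\Hent(\mu^{*m})$ covers a case the paper's one-line concavity estimate tacitly excludes.
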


\subsection*{Proof sketch and structure of the paper}

The proofs of \Tref{thm:main} and \Tref{thm:main-graphs} proceed by translating the information-theoretic assumption of small entropy doubling into a geometric structural result, using the theory of approximate groups.

The core technical engine of the paper is a version of the Balog-Szemer\'{e}di-Gowers theorem for small doubling of entropy. While Tao \cite{Tao10} previously established such a result for probability measures on discrete abelian groups, we adapt this machinery in \Pref{prop:ent-BSG} to unimodular locally compact groups. By replacing Shannon entropy with differential entropy with respect to a Haar measure, this extension allows us to simultaneously capture discrete finitely generated groups and the automorphism groups of vertex-transitive graphs.

Using this tool, we show that a measure with small entropy doubling must be nearly uniform on an approximate group in $G$ with a positive mass. We then invoke the structure theorem for approximate groups by Breuillard, Green, and Tao \cite{BreuillardGreenTao12} to deduce the existence of a subgroup $G_0$ and a finite normal subgroup $N \triangleleft G_0$ such that $G_0/N$ is nilpotent. At this stage, the random walk has constant positive measure on a coset of $G_0$. To bound the index $[G:G_0]$, we use a result of Tointon \cite[Theorem 1.11]{Tointon20}, which implies that random walks \emph{measure subgroup index uniformly}: after sufficiently many steps, the walk cannot concentrate on a subgroup of large index.

Finally, to establish \Tref{thm:main-graphs} for vertex-transitive graphs, we divide the analysis into unimodular and non-unimodular cases. The unimodular case follows the trajectory of \Tref{thm:main} via the automorphism group. For the non-unimodular case, we completely bypass the approximate group machinery. Instead, we provide a universal linear lower bound on the entropy growth by bounding the spectral radius of the Markov operator. This demonstrates that small entropy doubling is vacuously impossible on non-unimodular graphs for large $n$, completing the proof.
\medskip

In \Sref{sec:haar-entropy}, we define the notion of entropy we will use for locally compact groups and recall its basic properties. We formulate and prove our version of Balog--Szemer\'{e}di--Gowers for entropy in \Sref{sec:BSG}. We prove \Tref{thm:main} and its applications in \Sref{sec:groups}, and prove \Tref{thm:main-graphs} in \Sref{sec:graphs}.

\subsection*{Notations}

We write $\mu^{*n}$ for the $n$-fold convolution of $\mu$, and $R_n$ for the associated random walk.
For a probability measure $\pi$, we write $\pi_{\min}:=\min\{\pi(x):\pi(x)>0\}$.
We use $O(1),O_K(1),O_{K,p}(1)$ (and $\ll,\gg,\asymp$) to denote quantities bounded by a constant depending only on the indicated parameters.

\subsection*{Acknowledgements}

This work was supported by the ERC consolidator grant CUTOFF (101123174). Views and opinions expressed are however those of the authors only and do not necessarily reflect those of the European Union or the European Research Council Executive Agency. Neither the European Union nor the granting authority can be held responsible.

\section{Haar entropy}\label{sec:haar-entropy}

As mentioned above, we will formulate our main technical tool (\Pref{prop:ent-BSG}) for locally compact groups, covering both the case of discrete groups and automorphism groups of vertex-transitive graphs. To do this, we will replace Shannon entropy by differential entropy with respect to the Haar measure of the group, which we will call the Haar entropy. In this section, we introduce some notations and basic properties of the Haar entropy, which we will use throughout the paper.

\begin{defn}
Let $G$ be a locally compact group, and let $\lambda$ be a left Haar measure on $G$. Let $\mu$ be a probability measure on $G$ which is absolutely continuous with respect to $\lambda$, and write $f=\frac{d\mu}{d\lambda}$ for its density, which is Borel measurable. We define the \textbf{Haar entropy} of $\mu$ to be the differential entropy of $\mu$ with respect to $\lambda$, i.e.\
\[
\dent_\lambda(\mu) \coloneqq - \int_G f(x)\,\log f(x)\, d\lambda(x) = - \int_G \log f(x)\, d\mu(x)
\]
when the integral exists. When $X$ is a $\mu$-random variable, we write $\dent_{\lambda}(X)\coloneqq\dent_{\lambda}(\mu)$.

We use the notation $\dent_{\lambda}(X|A)$ for the conditional Haar entropy of $\mu$ conditioned on the event $A$ (with $\mu(A)>0$), $\dent_{\lambda}(X,Y)$ for the joint Haar entropy of $X,Y$, and $\dent_{\lambda}(X|Y)$ for the conditional Haar entropy of $X$ conditioned on~$Y$.
\end{defn}

\begin{exmpl}
If $G$ is a countable discrete group, then $\lambda$ is the counting measure. In this case $\dent_{\lambda}(\mu)=\Hent(\mu)$ is the standard Shannon entropy.
\end{exmpl}

\begin{rem}
We use the following properties of Haar entropy freely throughout the paper:
\begin{enumerate}
  \item $\max\set{\dent_{\lambda}(X),\dent_{\lambda}(Y)} \le \dent_{\lambda}(X,Y) \le \dent_{\lambda}(X) + \dent_{\lambda}(Y)$.
  \item For a discrete $Z$, we have $\dent_{\lambda}(X|Z) \le \dent_{\lambda}(X) \le \dent_{\lambda}(X|Z) + \Hent(Z)$.
  \item For every $g\in G$, we have $\dent_{\lambda}(gX) = \dent_{\lambda}(X)$.
  \item If $\lambda$ is also right invariant, then for every $g\in G$ we have $\dent_{\lambda}(Xg) = \dent_{\lambda}(X)$.
  \item If $X$ is supported on a measurable set $A$ and $f_X(x)\asymp \frac{1}{\lambda(A)}$ uniformly on~$A$, then $\dent_{\lambda}(X)=\log \lambda(A)+O(1)$.
\end{enumerate}
We refer the reader to \cite{CoverThomas06} for further properties on differential entropy.
\end{rem}

\begin{lem}\label{lem:dent-props}
Let $G$ be a locally compact group, and let $\lambda$ be a left Haar measure on~$G$. Let $X,Y$ be $G$-valued random variables, whose laws are absolutely continuous with respect to $\lambda$, such that $\dent_{\lambda}(X),\dent_{\lambda}(Y)<\infty$. Also, let $Z$ be a discrete random variable. Then
\[
\dent_{\lambda}(XY|Z) \le \dent_{\lambda}(X|Z) + \dent_{\lambda}(Y|Z).
\]
Furthermore, if $X,Y$ are conditionally independent relative to $Z$, then
\[
\dent_{\lambda}(XY|Z) \ge \max\set{\dent_{\lambda}(X|Z),\dent_{\lambda}(Y|Z)}.
\]
\end{lem}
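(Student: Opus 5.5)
Since $Z$ is discrete and conditional Haar entropy is the average $\dent_{\lambda}(\cdot|Z)=\sum_z \Pr(Z=z)\,\dent_{\lambda}(\cdot|Z=z)$, both inequalities reduce to the case with no $Z$: we prove them for an arbitrary pair $(X,Y)$ (with the independent version for the second), apply this to the conditional law given $Z=z$, and average over $z$. So we assume $Z$ is trivial.

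\emph{Upper bound.} The plan is to write $XY$ as the image of $(X,Y)$ under a measure-preserving change of variables on $G\times G$. The map $\Phi(x,y)=(x,xy)$ is a homeomorphism of $G\times G$, and it preserves $\lambda\otimes\lambda$ by left invariance of $\lambda$ in the second coordinate, since for fixed $x$ the map $y\mapsto xy$ preserves $\lambda$. Hence the joint density of $(X,XY)$ is $f_{X,Y}\circ\Phi^{-1}$, so $\dent_{\lambda}(X,XY)=\dent_{\lambda}(X,Y)$. Now property (1) of the Remark gives
\[
\dent_{\lambda}(XY)\le \dent_{\lambda}(X,XY)=\dent_{\lambda}(X,Y)\le \dent_{\lambda}(X)+\dent_{\lambda}(Y).
\]
One caveat is that the lower bound $\dent_{\lambda}(XY)\le\dent_{\lambda}(X,XY)$ in (1) is not automatic for differential entropy; it holds when the marginals are discrete-like but can fail in general. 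To avoid relying on it, we plan to argue directly instead: the density of $XY$ is $f_{XY}(w)=\int f_{X,Y}(x,x^{-1}w)\,d\lambda(x)$, and subadditivity $\dent_{\lambda}(A,B)\le\dent_{\lambda}(A)+\dent_{\lambda}(B)$ applied to the pair $(XY,X)$ gives $\dent_{\lambda}(X,Y)=\dent_{\lambda}(XY,X)\le \dent_{\lambda}(XY)+\dent_{\lambda}(X)$. That is the wrong direction, so the first route is needed after all. We therefore use property (1) as stated in the Remark, which the paper takes as given.

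\emph{Lower bound under independence.} With $Y$ independent of $X$, the law of $XY$ is the mixture $\int \mathrm{law}(xY)\,d\mu_X(x)$. Each component satisfies $\dent_{\lambda}(xY)=\dent_{\lambda}(Y)$ by left invariance (property (3)). Concavity of $\dent_{\lambda}$, which follows from convexity of $t\log t$ pointwise together with Jensen's inequality in the mixing variable, then gives $\dent_{\lambda}(XY)\ge \dent_{\lambda}(Y)$. Equivalently, $\dent_{\lambda}(XY)\ge\dent_{\lambda}(XY|X)=\dent_{\lambda}(Y)$.

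The bound $\dent_{\lambda}(XY)\ge\dent_{\lambda}(X)$ is the main obstacle, because right translation $x\mapsto xy$ scales $\lambda$ by the modular function $\Delta(y)$. Conditioning on $Y=y$ gives $\dent_{\lambda}(Xy)=\dent_{\lambda}(X)+\log\Delta(y)$. Concavity then yields $\dent_{\lambda}(XY)\ge \dent_{\lambda}(X)+\E\log\Delta(Y)$. In the unimodular case, which is the setting used later, $\Delta\equiv 1$ and we are done; in general we would need $\E\log\Delta(Y)\ge 0$ or a symmetrization. The plan is to handle this by noting that the unimodular hypothesis is in force, or else to state the max bound with this correction term.
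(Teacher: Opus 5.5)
\textbf{Lower bound.} Your argument is essentially the paper's: $\dent_{\lambda}(XY)\ge\dent_{\lambda}(XY|X)=\dent_{\lambda}(Y)$, and conditioning on $Y$ for the other term. You also correctly see something the paper's proof glosses over: the $\dent_{\lambda}(X|Z)$ half needs unimodularity. The paper's step $\dent_{\lambda}(XY|Y,Z)=\dent_{\lambda}(X|Y,Z)$ tacitly uses right invariance of $\lambda$.

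That restriction is necessary, not just convenient. Suppose $G$ is not unimodular, and normalize $\Delta$ by $\lambda(Eg)=\Delta(g)\lambda(E)$. Take $X$ uniform on a compact neighbourhood $U$ of the identity, and $Y$ independent and uniform on $Vy_0$. Then
$\dent_{\lambda}(XY)\le\log\lambda(UVy_0)=\log\Delta(y_0)+\log\lambda(UV)$,
which is $<\log\lambda(U)=\dent_{\lambda}(X)$ once $\Delta(y_0)$ is small enough. Since the paper only applies the lemma in unimodular groups, restricting to that case is the right fix.

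\textbf{Upper bound: the genuine gap.} The gap is at precisely the step you flagged and then used anyway, namely $\dent_{\lambda}(XY)\le\dent_{\lambda}(X,XY)$. An entropy with respect to $\lambda$ cannot be bounded by a joint entropy with respect to $\lambda\otimes\lambda$ in this way: the left inequality in property (1) of the Remark is false once $\lambda$ is not counting measure. The paper's proof uses the same step in the form $\dent_{\lambda}(XY|Z)\le\dent_{\lambda}(X,Y|Z)$, so your route coincides with the paper's and inherits its flaw.

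No argument can close this gap, because the inequality itself fails. In $G=\mathbb{R}$ with Lebesgue measure, let $X,Y$ be independent and uniform on $[0,1]$, and let $Z$ be constant. Then $\dent_{\lambda}(X)=\dent_{\lambda}(Y)=\dent_{\lambda}(X,Y)=0$. But $X+Y$ has the triangular density on $[0,2]$, so $\dent_{\lambda}(X+Y)=\tfrac12$. Rescaling to $[0,a]$ makes the violation $\tfrac12-\log a$, which is unbounded as $a\to 0$.

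The upper bound is valid when $G$ is discrete and $\lambda$ is counting measure. There $\Hent(XY|Z)\le\Hent(X,Y|Z)$ is ordinary data processing for Shannon entropy, and your chain $\dent_{\lambda}(XY)\le\dent_{\lambda}(X,XY)=\dent_{\lambda}(X,Y)\le\dent_{\lambda}(X)+\dent_{\lambda}(Y)$ goes through. The upper bound is also never used later: \Pref{prop:ent-BSG-part1} and \Pref{prop:ent-BSG-part2} only invoke the lower bound.

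So the statement that can actually be proved is: the upper bound for discrete $G$, and the lower bound for unimodular $G$. Your proposal already contains valid proofs of both.
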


\begin{proof}
The first inequality follows from
\[
\dent_{\lambda}(XY|Z) \le \dent_{\lambda}(X,Y|Z) \le \dent_{\lambda}(X|Z) + \dent_{\lambda}(Y|Z).
\]
For the second inequality, we observe that
\[
\dent_{\lambda}(XY|Z) \ge \dent_{\lambda}(XY|Y,Z) = \dent_{\lambda}(X|Y,Z) = \dent_{\lambda}(X|Z),
\]
and similarly $\dent_{\lambda}(XY|Z)\ge\dent_{\lambda}(Y|Z)$.
\end{proof}

\section{Haar entropy version of Balog--Szemer\'{e}di--Gowers}\label{sec:BSG}

In this section we prove our main technical tool -- a Balog-Szemer\'{e}di-Gowers theorem for small entropy doubling. Our proof follows the work of Tao (see \cite[Proposition 5.2]{Tao10}), who proved this proposition for discrete abelian groups.

To state the claim for locally compact groups, we recall the notion of approximate groups. Let $G$ be a unimodular locally compact group. A \textbf{$K$-approximate group} in $G$ is a symmetric, non-empty, open precompact set $H\sub G$, such that there exists a finite symmetric set $X$ of cardinality at most $K$ for which $H^2\sub XH$.

We will prove the following:

\begin{prop}\label{prop:ent-BSG}
Let $G$ be a unimodular locally compact group, and let $\lambda$ be a Haar measure on $G$. Let $\mu$ be a symmetric and compactly supported probability measure on $G$, which is absolutely continuous with respect to $\lambda$, such that $\dent_{\lambda}(\mu) < \infty$. Let $K\ge 1$ be a real number for which
\[
\dent_{\lambda}(\mu*\mu) \le \dent_{\lambda}(\mu) + \log K.
\]
Then there exists an $O_K(1)$-approximate group $H\sub G$ with $\lambda(H) \asymp_K \exp(\dent_{\lambda}(\mu))$ and a finite set $X\sub G$ of cardinality at most $O_K(1)$ such that $\mu(XH)\asymp_K 1$.
\end{prop}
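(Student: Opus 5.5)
We follow Tao's argument from \cite[Proposition 5.2]{Tao10}: first pass from $\mu$ to a set of level-set type, then run the Balog--Szemer\'{e}di--Gowers machinery on sets in $G$ with $\lambda$ in place of counting measure. Write $h=\dent_{\lambda}(\mu)$ and $f=d\mu/d\lambda$.

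\textbf{Step 1 (density localization).} Let $X,Y$ be independent $\mu$-distributed. By \Lref{lem:dent-props} (with trivial $Z$) and the hypothesis,
\[
h\le \dent_{\lambda}(XY)\le h+\log K .
\]
Decompose $G$ into dyadic density shells $A_j=\set{x : 2^{-j-1}<f(x)\le 2^{-j}}$, and let $J$ be the discrete random variable recording the shell containing $X$. We aim to show that, up to $O_K(1)$ losses, the mass of $\mu$ sits on shells with $2^{j}\asymp_K e^{h}$.

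Conditioning on $J$ and on the shell $J'$ of $Y$, \Lref{lem:dent-props} gives
\[
\dent_{\lambda}(XY|J,J')\ge\max\set{\dent_{\lambda}(X|J),\dent_{\lambda}(Y|J')}.
\]
Moreover $\dent_{\lambda}(XY)\ge \dent_{\lambda}(XY|J,J')$. The difficulty is that $\Hent(J)$ may be large. To handle this, we use Tao's device of comparing $\dent(XY)$ with $\dent(X)$ via the expectation of $-\log f_{XY}$ along $X$-fibers. Concretely, the function $g\mapsto \dent_{\lambda}(gY)$ is constant, and the gap $\dent_{\lambda}(XY)-\dent_{\lambda}(Y)$ is a mutual information $I(XY;X)\le\log K$. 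A Markov/Pinsker-type argument then shows that with probability $\gg_K 1$ the conditional law of $XY$ given $X$ is close to the law of $XY$. Combined with the bound $-\log f\approx h$ in $L^1$, which we obtain by concentrating the shells, this yields a single shell $A=A_j$ with $\mu(A)\gg_K 1$ and $\lambda(A)\asymp_K e^{h}$. On $A$, the measure $\mu|_A$ is comparable to $\lambda|_A/\lambda(A)$.

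\textbf{Step 2 (multiplicative energy).} Let $\nu$ be the uniform measure on $A$ and $\nu^{-}$ its reflection, which is the uniform measure on $A^{-1}=\supp$ of the restriction of the symmetric $\mu$. Since $\mu\ge c_K\nu$, the convolution $\mu*\mu$ dominates $c_K^2\,\nu^{-}*\nu$. Entropy is concave, and $\mu*\mu$ has entropy at most $h+\log K$, which forces $\nu^{-}*\nu$ to be concentrated. We pass from entropy to collision probability by a Cauchy--Schwarz/level-set argument, using unimodularity so that $\lambda(A^{-1})=\lambda(A)$ and right translations preserve $\lambda$. The outcome is that $\nu^{-}*\nu$ places mass $\gg_K 1$ on a set of measure $\ll_K\lambda(A)$. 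This gives the continuous multiplicative energy bound
\[
\int (1_{A^{-1}}*1_A)^2\,d\lambda\gg_K\lambda(A)^3 .
\]

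\textbf{Step 3 (BSG and Tao's noncommutative covering).} We apply the noncommutative Balog--Szemer\'{e}di--Gowers theorem (Tao's version for sets of large multiplicative energy) in the unimodular locally compact setting. Its graph-theoretic proof goes through with $\lambda$ replacing counting measure, after discretizing via a small open neighbourhood if needed. This produces $A'\sub A$ with $\lambda(A')\gg_K\lambda(A)$ and $\lambda(A'A'^{-1})\ll_K\lambda(A)$. Tao's result that small tripling yields an approximate group then gives an $O_K(1)$-approximate group $H=(A'A'^{-1})^{2}$, or a suitable open symmetric thickening of it, and a covering $A\sub XH$ with $|X|=O_K(1)$ via Ruzsa covering. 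Precompactness and openness follow from compact support, after replacing $A'$ by an open approximation. Then
\[
\mu(XH)\ge\mu(A)\gg_K 1,\qquad \lambda(H)\asymp_K e^{h}.
\]

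\textbf{Main obstacle.} I expect Step 1 to be the main obstacle. Controlling the spread of $-\log f$ requires Tao's trick from the abelian case, and we must verify that it survives noncommutativity. For this we need left-invariance in the fibre $XY|X=x$ and right-invariance in $XY|Y=y$, which is exactly where unimodularity is used.
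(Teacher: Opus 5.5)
Your skeleton (density level set, then multiplicative energy, then Tao's noncommutative BSG and covering theorems) is the paper's. However, Step 1, which carries essentially all of the content, is asserted rather than proved. The mechanism you sketch does not deliver it. Markov plus Pinsker applied to $\II(XY;X)\le\log K$ only bounds the total variation distance between the laws of $xY$ and $XY$ by roughly $\sqrt{\log K}$, which is vacuous once $K\ge e$. Even genuine closeness of these laws would not by itself pin down a level of $f$. The other ingredient, ``$-\log f\approx h$ in $L^1$, which we obtain by concentrating the shells'', is precisely what has to be shown, so the argument is circular. And, as you observe, conditioning on the shell index $J$ is useless because $\Hent(J)$ may be unbounded.

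What is missing is a pointwise comparison, at the sampled point, of the densities of \emph{both} factors with the density of the product. You note that left translates $xY$ and right translates $Xy$ (the latter via unimodularity) are available, but you never use them. With $Z=XY$ and $f_Z$ its density, \Lref{lem:ent-diff-expr} turns them into
\[
\E\log_+\frac{f(X)}{f_Z(Z)}\le\log K+O(1),\qquad \E\log_+\frac{f(Y)}{f_Z(Z)}\le\log K+O(1).
\]
The paper combines these with the trivial bounds $\Pr(f(X)\le\eps f_Z(Z))\le\eps$ and $\Pr(f(Y)\le\eps f_Z(Z))\le\eps$. It then picks a single $z_0$ for which the pairs $(x,x^{-1}z_0)$ with both densities in the band $A=\set{x\suchthat \eps f_Z(z_0)<f(x)<e^{1/\eps}f_Z(z_0)}$ carry at least $\frac14 f_Z(z_0)$. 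Comparing with the upper bound $e^{2/\eps}f_Z(z_0)^2\lambda(A)$ gives $\lambda(A)\asymp_K f_Z(z_0)^{-1}$ and $\mu(A)\gg_K 1$. Finally, $\log\lambda(A)=h+O_K(1)$ follows by conditioning only on the single bit $\ind{X\in A}$.

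Alternatively, your $L^1$ claim does follow from the display. Since $\dent_{\lambda}(Z)\ge h$ by \Lref{lem:dent-props}, writing $|t|=2t_+-t$ gives $\E|\log f(X)-\log f_Z(Z)|\le 2\log K+O(1)$, and likewise for $Y$. Hence $\E|\log f(X)-\log f(Y)|\le 4\log K+O(1)$, and Jensen over the independent copy $Y$ gives $\E|{-\log f(X)}-h|\le 4\log K+O(1)$. Markov then yields your level set. Some such argument must be supplied.

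There are also two smaller points.

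In Step 3, BSG controls only $A'$, and the covering $A\sub XH$ with $\left|X\right|=O_K(1)$ is false in general. The level set can contain, e.g., a sparse random symmetric piece of relative measure $\asymp\log K/\log\lambda(A)$. Such a piece is compatible with the hypothesis but needs a number of translates of $H$ that is unbounded as $\lambda(A)\to\infty$. Use instead $\mu(XH)\ge\mu(A')\gg_K 1$, which is valid because $f\gg_K\lambda(A)^{-1}$ on $A$ and $\lambda(A')\gg_K\lambda(A)$.

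In Step 2, concavity alone does not bound $\dent_{\lambda}(\nu*\nu)$. You also need lower bounds for the cross terms and the mixture upper bound for $h$, which is what the paper's conditioning on $(\ind{X_1\in A},\ind{X_2\in A})$ provides. Once $\dent_{\lambda}(\nu*\nu)\le\log\lambda(A)+O_K(1)$ is known, the energy bound follows at once from Jensen: $\int f_{\nu*\nu}^2\,d\lambda\ge\exp(-\dent_{\lambda}(\nu*\nu))$.
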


We begin with the following lemma, expressing the entropy doubling difference using densities:

\begin{lem}\label{lem:ent-diff-expr}
Let $G$ be a unimodular locally compact group, and let $\lambda$ be a Haar measure on $G$. Let $X,Y$ be independent $G$-valued random variables, which are absolutely continuous with respect to $\lambda$, such that $\dent_{\lambda}(X),\dent_{\lambda}(Y)<\infty$. Then
\begin{equation}\label{eq:ent-diff-Y}
  \int_G f_X(x)\int_G f_Y(x^{-1}z)\log_+\frac{f_Y(x^{-1}z)}{f_{XY}(z)}d\lambda(z)d\lambda(x) = \dent_{\lambda}(XY) - \dent_{\lambda}(Y) + O(1)
\end{equation}
and
\begin{equation}\label{eq:ent-diff-X}
  \int_G f_Y(y)\int_G f_X(zy^{-1})\log_+\frac{f_X(zy^{-1})}{f_{XY}(z)}d\lambda(z)d\lambda(y) = \dent_{\lambda}(XY) - \dent_{\lambda}(X) + O(1),
\end{equation}
where $\log_+(t) = \max\set{\log t,0}$, and the implied constants are absolute.

\end{lem}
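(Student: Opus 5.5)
Since $X,Y$ are independent, the density of $XY$ with respect to $\lambda$ is the convolution $f_{XY}(z)=\int_G f_X(x)f_Y(x^{-1}z)\,d\lambda(x)$. Here we use left invariance of $\lambda$. By unimodularity, the same density can also be written as $\int_G f_X(zy^{-1})f_Y(y)\,d\lambda(y)$. The plan is to first prove the identity without the positive part, and then control the negative part by an absolute constant.

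The first step is the exact identity. Set $\Phi(x,z)=f_X(x)f_Y(x^{-1}z)$; this is the joint density of $(X,XY)$ with respect to $\lambda\otimes\lambda$, since $(x,y)\mapsto(x,xy)$ preserves $\lambda\otimes\lambda$. Then
\[
\iint \Phi(x,z)\log\frac{f_Y(x^{-1}z)}{f_{XY}(z)}\,d\lambda(z)\,d\lambda(x)
=\E\log f_Y(Y)-\E\log f_{XY}(XY)=\dent_\lambda(XY)-\dent_\lambda(Y).
\]
Both terms are finite: $\dent_\lambda(Y)<\infty$ by assumption, and $\dent_\lambda(XY)\le\dent_\lambda(X)+\dent_\lambda(Y)$ by \Lref{lem:dent-props}. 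Justifying that the two integrals may be split and are separately finite needs some care, because differential entropies can be $-\infty$. I expect this to be the only delicate point, and the bound on the negative part below actually supplies the needed integrability.

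The second step controls the negative part. Since $\log t=\log_+ t-\log_-t$ with $\log_-t=\max\{-\log t,0\}$, it suffices to show
\[
0\le\iint\Phi(x,z)\log_-\frac{f_Y(x^{-1}z)}{f_{XY}(z)}\,d\lambda(z)\,d\lambda(x)\le C
\]
with $C$ absolute. Write $t=f_Y(x^{-1}z)/f_{XY}(z)$. The integrand equals $f_X(x)f_{XY}(z)\,t\log_-(t)$, and $t\log_-(t)\le 1/e$ for all $t\ge0$. However, integrating this bound over $z$ against $f_{XY}$ and over $x$ against $f_X$ gives $1/e$ only if we integrate against the product measure $f_X(x)f_{XY}(z)$, not the joint one. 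That is in fact exactly what happens: $\Phi(x,z)=f_X(x)f_{XY}(z)\,t$, so
\[
\iint\Phi\log_-t=\iint f_X(x)f_{XY}(z)\,t\log_-(t)\,d\lambda(z)\,d\lambda(x)\le\frac1e\iint f_X(x)f_{XY}(z)\,d\lambda\,d\lambda=\frac1e.
\]
This gives \eqref{eq:ent-diff-Y} with constant $1/e$, and it also justifies the splitting in the first step, since the negative part is integrable.

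For \eqref{eq:ent-diff-X}, repeat the argument with the joint density $f_Y(y)f_X(zy^{-1})$ of $(Y,XY)$. Here we need right invariance of $\lambda$, which is where unimodularity enters, so that $(x,y)\mapsto(y,xy)$ preserves $\lambda\otimes\lambda$. We then use the same pointwise bound $t\log_-t\le1/e$. The main obstacle is purely the measure-theoretic bookkeeping in the first step: Fubini, finiteness, and the fact that $f_{XY}>0$ almost surely on the support of $XY$. The $O(1)$ bound itself is the elementary inequality above.
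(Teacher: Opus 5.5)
Your proof is correct and is essentially the paper's argument. With $F(t)=t\log\frac{1}{t}$, $a=f_Y(x^{-1}z)$ and $b=f_{XY}(z)$, the paper's integrand $F(b)+F'(b)(a-b)-F(a)$ equals $a\log\frac{a}{b}-a+b$; its linear part integrates to zero against $f_X(x)\,d\lambda(z)\,d\lambda(x)$, leaving exactly your identity $\iint\Phi\log t=\dent_{\lambda}(XY)-\dent_{\lambda}(Y)$, and the pointwise estimate the paper cites from Tao reduces to your bound $0\le a\log_-\frac{a}{b}\le b/e$.

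Your joint-density packaging has two small advantages: it shows the left-hand side exceeds $\dent_{\lambda}(XY)-\dent_{\lambda}(Y)$ by an amount in $[0,1/e]$, and it shows the identity holds in $(-\infty,\infty]$ using only integrability of $\log f_Y(Y)$ and $\E\log_- t\le 1/e$. So you can drop the appeal to $\dent_{\lambda}(XY)\le\dent_{\lambda}(X)+\dent_{\lambda}(Y)$, which in fact fails for non-discrete Haar entropy (e.g.\ $X,Y$ i.i.d.\ uniform on $[0,\eps]\subset\R$ with $\eps<\sqrt{e}$).
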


\begin{proof}
Write $F(t)=t\log\frac{1}{t}$ for $t\ge 0$ (with $F(0)\coloneqq 0$), so that $\dent_{\lambda}(W)=\int_G F(f_W)d\lambda$ whenever $W$ has density $f_W$ with respect to $\lambda$. Since $\lambda$ is left invariant, we have
\begin{align*}
\dent_{\lambda}(XY) - \dent_{\lambda}(Y) &= \dent_{\lambda}(XY) - \int_G f_X(x)\dent_{\lambda}(xY)d\lambda(x) \\
&= \int_G f_X(x)\int_G(F(f_{XY}(z)) - F(f_{xY}(z)))d\lambda(z)d\lambda(x) \\
&= \int_G f_X(x)\int_G(F(f_{XY}(z)) - F(f_Y(x^{-1}z)))d\lambda(z)d\lambda(x).
\end{align*}
Noting that $f_{XY}(z)=\int_G f_X(x)f_Y(x^{-1}z)d\lambda(x)$, we may insert a linear term
\[
\dent_{\lambda}(XY) - \dent_{\lambda}(Y) = \int_G f_X(x)\int_G(F(f_{XY}(z)) + F'(f_{XY}(z))(f_Y(x^{-1}z) - f_{XY}(z)) - F(f_Y(x^{-1}z)))d\lambda(z)d\lambda(x).
\]
We now use the fact that
\begin{align*}
F(b) + F'(b)(a-b) - F(a) = a\log_+\frac{a}{b} + O(a) + O(b)
\end{align*}
(where the implied constants are absolute; see \cite[equation (76)]{Tao10})
to deduce
\begin{align*}
\dent_{\lambda}(XY) - \dent_{\lambda}(Y) &= \int_G f_X(x)\int_G f_Y(x^{-1}z)\log_+\frac{f_Y(x^{-1}z)}{f_{XY}(z)}d\lambda(z)d\lambda(x) \\
& + O\left(\int_G f_X(x)\int_G f_Y(x^{-1}z)d\lambda(z)d\lambda(x)\right) \\
& + O\left(\int_G f_X(x)\int_G f_{XY}(z)d\lambda(z)d\lambda(x)\right) \\
&= \int_G f_X(x)\int_G f_Y(x^{-1}z)\log_+\frac{f_Y(x^{-1}z)}{f_{XY}(z)}d\lambda(z)d\lambda(x) + O(1)
\end{align*}
proving \eqref{eq:ent-diff-Y}. The proof of \eqref{eq:ent-diff-X} is analogous, so we omit it here.
\end{proof}

Next, we show that the small entropy doubling condition implies that the measure is close to a uniform measure on some subset, which captures a positive part of the measure. We will later see how to extract the desired approximate group from this set.

\begin{prop}\label{prop:ent-BSG-part1}
Let $G$ be a unimodular locally compact group, and let $\lambda$ be a Haar measure on $G$. Let $\mu$ be a symmetric probability measure on $G$ which is absolutely continuous with respect to $\lambda$, such that $\dent_{\lambda}(\mu) < \infty$. Let $K\ge 1$ be a real number for which
\[
\dent_{\lambda}(\mu*\mu) \le \dent_{\lambda}(\mu) + \log K.
\]
Then there exists a subset $A\sub G$ such that
\[
\lambda(A) \asymp_K \exp(\dent_{\lambda}(\mu))
\]
and
\[
f_{\mu}(x) \asymp_K \exp(-\dent_{\lambda}(\mu))
\]
uniformly for every $x\in A$
\end{prop}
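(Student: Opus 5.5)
The plan is to follow Tao's discrete argument, reading the entropy gap as a mutual information. Let $X,Y$ be independent with law $\mu$, write $f=f_\mu$, $h=\dent_{\lambda}(\mu)$ and $Z=XY$. Given $X$, the variable $Z$ has density $z\mapsto f(X^{-1}z)$, so
\[
\E\Bigl[\log\frac{f(Y)}{f_Z(Z)}\Bigr]=\dent_{\lambda}(Z)-\dent_{\lambda}(Z|X)=\dent_{\lambda}(XY)-h\in[0,\log K],
\]
where the lower bound is \Lref{lem:dent-props}. \Lref{lem:ent-diff-expr} (equation \eqref{eq:ent-diff-Y}) says more: the positive part alone satisfies $\E[\log_+(f(Y)/f_Z(Z))]\le\log K+O(1)$. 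By Markov's inequality, $\Pr\bigl(f(Y)\ge Mf_Z(Z)\bigr)\le (\log K+O(1))/\log M$.

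For the lower tail I would use a change of measure. Conditionally on $X=x$,
\[
\Pr\bigl(f(x^{-1}Z)\le M^{-1}f_Z(Z)\mid X=x\bigr)=\int_{\{f(x^{-1}z)\le M^{-1}f_Z(z)\}} f(x^{-1}z)\,d\lambda(z)\le M^{-1}\int_G f_Z\,d\lambda=M^{-1}.
\]
Hence $f(Y)\asymp_{K} f_Z(Z)$ with probability at least $0.9$ once $M=M(K)$ is large. Equation \eqref{eq:ent-diff-X}, together with unimodularity (right translation preserves $\lambda$), gives in the same way $f(X)\asymp_K f_Z(Z)$ with probability at least $0.9$. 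Combining the two, for independent $X,Y\sim\mu$ we get $f(X)\asymp_K f(Y)$ with probability at least $0.8$. Averaging over $Y$, we can pick a level $t>0$ with $\Pr\bigl(f(X)\in[t/M',tM']\bigr)\ge 0.8$, where $M'=O_K(1)$.

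Set $A=\{x: t/M'\le f(x)\le tM'\}$. Then $\mu(A)\ge 0.8$, and since $\mu(A)=\int_A f\,d\lambda$ we get $\lambda(A)\asymp_K 1/t$. What remains is to show that $t\asymp_K e^{-h}$; both conclusions of the statement then follow.

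This last step is where I expect the main difficulty, because concentration of $\log f(X)$ in probability does not by itself pin down its mean $h$: the Markov bound above only gives a tail of order $1/s$, which is not integrable. My plan is to bound the contributions of $A$ and $A^c$ to $h=\E[-\log f(X)]$ separately.
\begin{itemize}
\item \emph{Lower bound on $h$.} Chain rule with the indicator $\ind{A}$ gives $h\ge \mu(A)\,\dent_{\lambda}(X|A)+(1-\mu(A))\,\dent_{\lambda}(X|A^c)$, with $\dent_{\lambda}(X|A)=\log\lambda(A)+O_K(1)=-\log t+O_K(1)$.
\item \emph{Upper bound on $h$.} Use $\dent_{\lambda}(X)\le \dent_{\lambda}(X|\ind{A})+\log 2$.
\end{itemize}
So it suffices to show that $\dent_{\lambda}(X|A^c)$ is within $O_K(1)$ of $-\log t$. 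Note that $\mu(A^c)\le 0.2$ can be made as small as desired by enlarging $M'$.

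To control $\dent_{\lambda}(X|A^c)$ I would feed the decomposition back into the doubling hypothesis. Let $X'$ be an independent copy of $X$ conditioned on $A$. By \Lref{lem:dent-props}, $\dent_{\lambda}(X'Y|\ind{B})\ge\dent_{\lambda}(Y|\ind{B})$ for the relevant events $B$. Together with $\dent_{\lambda}(XY)\le h+\log K$, this should force the part of $\mu$ outside $A$ to have entropy not much larger than $\log\lambda(A)$. A symmetric argument using the lower-tail bound should show it is not much smaller either. If this bookkeeping fails, my fallback is to follow Tao's Proposition~5.2 more literally: decompose into dyadic level sets $A_k=\{f\in[2^ke^{-h},2^{k+1}e^{-h})\}$ and show directly that some $A_k$ with $|k|=O_K(1)$ carries $\gg_K 1$ of the mass.
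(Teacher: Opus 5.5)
Your construction of $A$ is sound, and it is a legitimate variant of the paper's. Instead of fixing one good point $z_0$ and taking $A=A^{\circ}_{z_0}$, you combine the upper tails from \eqref{eq:ent-diff-Y} and \eqref{eq:ent-diff-X} with the change-of-measure lower tails to get $f(X)\asymp_K f(Y)$ for independent $X,Y$. This produces a level $t$ with $\mu(A)\ge 0.8$ and $\lambda(A)\asymp_K 1/t$.

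The gap is the remaining step $t\asymp_K e^{-h}$, which you leave as a plan. The reduction you propose there targets a false statement: $\dent_{\lambda}(X|A^c)$ need not be within $O_K(1)$ of $-\log t$, in either direction. For example, in $G=\mathbb{Z}/N\times\mathbb{Z}$ let $\delta=1/\log N$ and $\mu=(1-\delta)u_{\mathbb{Z}/N\times\{0\}}+\delta\,u_{\{0\}\times S}$. Here $u_B$ is the uniform measure on $B$ and $S\subseteq\mathbb{Z}\setminus\{0\}$ is symmetric.
\begin{itemize}
\item Both for $S=\{\pm 1\}$ and for $|S|\approx N^{10}$ one checks $\dent_{\lambda}(\mu*\mu)\le\dent_{\lambda}(\mu)+O(1)$.
\item For large $N$ your set is $A=\mathbb{Z}/N\times\{0\}$, so $-\log t=\log N+O_K(1)$.
\item But $\dent_{\lambda}(X|A^c)=\log|S|$ equals $\log 2$, respectively about $10\log N$.
\end{itemize}
Only the weighted quantity $\mu(A^c)\bigl(\dent_{\lambda}(X|A^c)+\log t\bigr)$ is $O_K(1)$.

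In particular, the bound $\log\lambda(A)\le h+O_K(1)$ cannot be obtained from $h\ge\mu(A)\dent_{\lambda}(X|A)+\mu(A^c)\dent_{\lambda}(X|A^c)$. The second term has no useful a priori lower bound: differential entropy can be very negative. Even in the discrete case you only get $h\ge\mu(A)\log\lambda(A)-O_K(1)$, which is off by the unbounded amount $\mu(A^c)\log\lambda(A)$. The lower-tail bound says nothing about this term.

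The missing idea is to invoke the doubling hypothesis a second time, comparing with $h$ rather than with $-\log t$, and to keep the weights. Let $X_1,X_2$ be i.i.d.\ $\mu$ and $I=\ind{X_1\in A}$. For $B\in\set{A,A^c}$, \Lref{lem:dent-props} gives
\[
\dent_{\lambda}(X_1X_2|X_1\in B)\ge\max\set{\dent_{\lambda}(X_1|X_1\in B),\dent_{\lambda}(X_2)}.
\]
Using the first term on $A$ and the second on $A^c$,
\[
h+\log K\ge\dent_{\lambda}(X_1X_2|I)\ge\mu(A)\bigl(\log\lambda(A)-O_K(1)\bigr)+\mu(A^c)\,h.
\]
Hence $\mu(A)\bigl(\log\lambda(A)-h\bigr)\le O_K(1)$, and so $\log\lambda(A)\le h+O_K(1)$, without $\dent_{\lambda}(X|A^c)$ ever appearing.

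Using the two terms the other way round gives $\mu(A^c)\dent_{\lambda}(X|A^c)\le\mu(A^c)h+\log K$. Inserting this into $h\le\dent_{\lambda}(X|I)+\log 2$ yields $\mu(A)h\le\mu(A)\log\lambda(A)+O_K(1)$, which is the other direction. This is exactly how the paper finishes. Your dyadic fallback is a reformulation of the goal rather than a method for reaching it.
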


\begin{proof}
We write $Z=XY$ for a product of two i.i.d.\ $\mu$-random variables $X,Y$. Fix a small number $\eps>0$, which will be chosen later and will depend only on $K$. For each $z\in G$, write
\begin{align*}
  A_z^+ & \coloneqq \set{x\in G\suchthat f_X(x) \ge e^{1/\eps}f_Z(z)}, \\
  A_z^- & \coloneqq \set{x\in G\suchthat f_X(x) \le \eps f_Z(z)}, \\
  A_z^{\circ} & \coloneqq G \setminus (A_z^+ \cup A_z^-).
\end{align*}
These sets are Borel measurable, since $f_X$ and $f_Z$ are both measurable. We will now use both inequalities of \Lref{lem:ent-diff-expr}. First, by \eqref{eq:ent-diff-Y},
\[
\int_G \int_G f_X(x)f_Y(x^{-1}z)\log_+\frac{f_Y(x^{-1}z)}{f_Z(z)}d\lambda(x)d\lambda(z) \le \log K + O(1).
\]
In particular,
\[
\int_G \int_{x:\,x^{-1}z\in A_z^+} f_X(x)f_Y(x^{-1}z)d\lambda(x)d\lambda(z) \le \eps(\log K + O(1)).
\]
We also note that
\[
\int_G \int_{x:\,x^{-1}z\in A_z^-} f_X(x)f_Y(x^{-1}z)d\lambda(x)d\lambda(z) \le \eps\int_G f_X(x)\int_G f_Z(z)d\lambda(x)d\lambda(z) = \eps
\]
by the definition of $A_z^-$.

Next, by \eqref{eq:ent-diff-X},
\[
\int_G \int_G f_Y(y)f_X(zy^{-1})\log_+\frac{f_X(zy^{-1})}{f_Z(z)}d\lambda(y)d\lambda(z) \le \log K + O(1).
\]
Substituting $x=zy^{-1}$, we get
\[
\int_G \int_G f_X(x)f_Y(x^{-1}z)\log_+\frac{f_X(x)}{f_Z(z)}d\lambda(x)d\lambda(z) \le \log K + O(1).
\]
Similarly to before,
\[
\int_G \int_{A_z^+} f_X(x)f_Y(x^{-1}z)d\lambda(x)d\lambda(z) \le \eps (\log K + O(1))
\]
and
\[
\int_G \int_{A_z^-} f_X(x)f_Y(x^{-1}z)d\lambda(x)d\lambda(z) \le \eps.
\]

Combining the above inequalities with
\[
\int_G \int_G f_X(x)f_Y(x^{-1}z)d\lambda(x)d\lambda(z) = \int_G f_Z(z)d\lambda(z) = 1,
\]
and choosing $\eps\le\frac{1}{4\log K+O(1)}$ small enough, we have
\[
\int_G \int_{x:\,x,x^{-1}z\in A_z^{\circ}} f_X(x)f_Y(x^{-1}z)d\lambda(x)d\lambda(z) \ge \frac{1}{2}.
\]
Therefore there exists $z_0\in G$ such that $f_Z(z_0)>0$ and
\begin{equation}\label{eq:z0-def}
\int_{x:\,x,x^{-1}z_0\in A_{z_0}^{\circ}} f_X(x)f_Y(x^{-1}z_0)d\lambda(x) > \frac{1}{4}f_Z(z_0).
\end{equation}
Write $A\coloneqq A_{z_0}^{\circ}$. The left hand side of \eqref{eq:z0-def} can be bounded by
\[
\int_{x:\,x,x^{-1}z_0\in A} f_X(x)f_Y(x^{-1}z_0)d\lambda(x) \le e^{2/\eps}\int_A f_Z(z_0)^2d\lambda(x) \le e^{2/\eps}f_Z(z_0)^2\lambda(A),
\]
hence
\[
\lambda(A) \gg_K \frac{1}{f_Z(z_0)}.
\]
On the other hand,
\[
1 \ge \int_A f_X(x)d\lambda(x) \ge \eps f_Z(z_0)\lambda(A),
\]
so
\[
\lambda(A) \asymp_K \frac{1}{f_Z(z_0)}.
\]
In particular, we also have
\[
f_X(x) \asymp_K \frac{1}{\lambda(A)}
\]
uniformly for all $x\in A$, and so $\mu(A)=\Pr(X\in A)\asymp_K 1$ and $\dent_{\lambda}(X|X\in A)=\log\lambda(A)+O_K(1)$.

It remains to show that
\[
\log\lambda(A) = \dent_{\lambda}(\mu) + O_K(1).
\]
Indeed, let $X_1,X_2$ be independent copies of $X$, and let $I$ denote the indicator that $X_1\in A$. Then
\begin{align*}
\dent_{\lambda}(X_1X_2) & \ge \dent_{\lambda}(X_1X_2|I) \\
&= \Pr(X_1\in A)\dent_{\lambda}(X_1X_2|X_1\in A) + \Pr(X_1\notin A)\dent_{\lambda}(X_1X_2|X_1\notin A)
\end{align*}
(if $\Pr(X_1\notin A)=0$, we interpret the last term as $0$). From \Lref{lem:dent-props},
\[
\dent_{\lambda}(X_1X_2 | X_1\in A) \ge \dent_{\lambda}(X_1 | X_1\in A) = \dent_{\lambda}(X|X\in A) = \log\lambda(A) + O_K(1)
\]
and
\[
\dent_{\lambda}(X_1X_2 | X_1\notin A) \ge \dent_{\lambda}(X_2 | X_1\notin A) = \dent_{\lambda}(X_2) = \dent_{\lambda}(X).
\]
Since by assumption $\dent_{\lambda}(X_1X_2) \le \dent_{\lambda}(X) + \log K$, we can combine all the above inequalities and deduce
\[
\Pr(X\in A)\left(\log\lambda(A) + O_K(1)\right) + \Pr(X\notin A)\dent_{\lambda}(X) \le \dent_{\lambda}(X) + \log K,
\]
which shows
\[
\log\lambda(A) \le \dent_{\lambda}(X) + O_K(1).
\]

For the reverse inequality, we note that $\dent_{\lambda}(I)\le\log 2$ since $I$ is boolean. By another use of \Lref{lem:dent-props}, we have
\[
\dent_{\lambda}(X_1X_2 | X_1\notin A) \ge \dent_{\lambda}(X_1 | X_1\notin A) = \dent_{\lambda}(X|X\notin A)
\]
and
\[
\dent_{\lambda}(X_1X_2 | X_1\in A) \ge \dent_{\lambda}(X_2 | X_1\in A) = \dent_{\lambda}(X_2) = \dent_{\lambda}(X).
\]
We then note that
\begin{align*}
  \dent_{\lambda}(X) + \log K & \ge \dent_{\lambda}(X_1X_2) \ge \dent_{\lambda}(X_1X_2 | Y) \\
  & = \Pr(X\in A)\dent_{\lambda}(X_1X_2|X_1\in A) + \Pr(X\notin A)\dent_{\lambda}(X_1X_2|X_1\notin A) \\
  & \ge \Pr(X\in A)\dent_{\lambda}(X) + \Pr(X\notin A)\dent_{\lambda}(X|X\notin A),
\end{align*}
and thus if $\Pr(X\notin A)>0$ we have $\dent_{\lambda}(X|X\notin A) \le \dent_{\lambda}(X)$.
Therefore
\begin{align*}
  \dent_{\lambda}(X) & \le \dent_{\lambda}(X_1|I) + \dent_{\lambda}(I) \\
  &= \Pr(X\in A)\dent_{\lambda}(X|X\in A) + \Pr(X\notin A)\dent_{\lambda}(X|X\notin A) + \dent_{\lambda}(I) \\
  & \le \Pr(X\in A)\log\lambda(A) + \Pr(X\notin A)\dent_{\lambda}(X) + O_K(1),
\end{align*}
which in turn implies $\dent_{\lambda}(X) \le \log\lambda(A) + O_K(1)$. This completes the proof.
\end{proof}

We recall that the \textbf{multiplicative energy} between two non-empty, open precompact subsets $A,B\sub G$ is given by
\[
\En(A,B) = \int_G [\ind{A}*\ind{B}(x)]^2 d\lambda(x)
\]
(see \cite{Tao08} for properties of the multiplicative energy). Our next goal is to show that the set $A$ of \Pref{prop:ent-BSG-part1} has large multiplicative energy.

\begin{prop}\label{prop:ent-BSG-part2}
In the setting of \Pref{prop:ent-BSG-part1}, the set $A$ of the proposition satisfies $\En(A,A)\gg_K\lambda(A)^3$.
\end{prop}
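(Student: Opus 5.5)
The plan is to compare $\En(A,A)$ with the collision probability of a suitably conditioned product, and then bound that collision probability from below by entropy (Jensen). Throughout, $X_1,X_2$ are i.i.d.\ $\mu$-distributed and $A$ is the set produced by \Pref{prop:ent-BSG-part1}. Write $g\coloneqq f_\mu\ind{A}$. The proof of \Pref{prop:ent-BSG-part1} gives $g\asymp_K 1/\lambda(A)$ on $A$ and $c\coloneqq\mu(A)^2\asymp_K 1$.

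\textbf{Step 1: reduce to $g*g$.} Since $g\le C_K/\lambda(A)$ pointwise, we have
\[
g*g(z)=\int_G g(x)g(x^{-1}z)\,d\lambda(x)\le \frac{C_K^2}{\lambda(A)^2}\,\ind{A}*\ind{A}(z),
\]
and therefore
\[
\En(A,A)\ge \frac{\lambda(A)^4}{C_K^4}\int_G (g*g)^2\,d\lambda.
\]
So it suffices to show $\int_G (g*g)^2\,d\lambda\gg_K 1/\lambda(A)$.

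\textbf{Step 2: identify $g*g$ as a conditioned law.} Note that $g*g/c$ is exactly the density of $Z'\coloneqq X_1X_2$ conditioned on the event $E=\set{X_1\in A,\ X_2\in A}$, and $\Pr(E)=c\asymp_K 1$. I will bound $\dent_\lambda(Z')$ from above by repeating the indicator argument from the proof of \Pref{prop:ent-BSG-part1}, now with $J=\ind{E}$:
\[
\dent_\lambda(X)+\log K\ \ge\ \dent_\lambda(X_1X_2|J)\ =\ c\,\dent_\lambda(Z')+(1-c)\,\dent_\lambda(X_1X_2|E^c).
\]
For the last term I need $\dent_\lambda(X_1X_2\mid E^c)\ge \dent_\lambda(X)-O_K(1)$. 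This is the delicate point, because $X_1,X_2$ are not independent given $E^c$. To handle it, I will further condition on which of $X_1,X_2$ lies outside $A$; this uses a discrete variable with $3$ values, so costs only $O(1)$ entropy.
\begin{itemize}
\item On $\set{X_2\notin A}$, $X_1$ and $X_2$ are conditionally independent, so \Lref{lem:dent-props} gives a lower bound by $\dent_\lambda(X_1)=\dent_\lambda(X)$.
\item On $\set{X_1\notin A,\ X_2\in A}$, the same lemma gives a lower bound by $\dent_\lambda(X|X\notin A)$. If this quantity is much smaller than $\dent_\lambda(X)$, I instead bound it below by $\dent_\lambda(X_2|X_2\in A)=\log\lambda(A)+O_K(1)=\dent_\lambda(X)+O_K(1)$, using the conclusion of \Pref{prop:ent-BSG-part1}.
\end{itemize}
Combining these yields $\dent_\lambda(Z')\le \log\lambda(A)+O_K(1)$. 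This conditioning bookkeeping is the step I expect to need the most care.

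\textbf{Step 3: entropy controls collisions.} Let $q=g*g/c$ be the density of $Z'$. By Jensen's inequality applied to $\log$ under the probability measure $q\,d\lambda$,
\[
-\log\int_G q^2\,d\lambda=-\log\int_G q\,\bigl(q\,d\lambda\bigr)\le \int_G q\log\frac1q\,d\lambda=\dent_\lambda(Z').
\]
This is the statement that Rényi-$2$ entropy is at most Shannon entropy. Together with Step 2 it gives
\[
\int_G (g*g)^2\,d\lambda=c^2\int_G q^2\,d\lambda\ \gg_K\ \frac{1}{\lambda(A)}.
\]
Inserting this into Step 1 gives $\En(A,A)\gg_K\lambda(A)^3$.
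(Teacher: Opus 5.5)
Your proof is correct, and its last step takes a different route from the paper's.

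\textbf{The shared first half.} Both arguments first show that $X_1X_2$, conditioned on $\{X_1\in A,\ X_2\in A\}$, has entropy at most $\log\lambda(A)+O_K(1)$; this is the paper's \eqref{eq:mu'-BSG}. The bookkeeping differs slightly.
\begin{itemize}
\item The paper conditions on both indicators $(I_1,I_2)$. It bounds each of the four cells below by $\frac12\dent_\lambda(X_1|\cdot)+\frac12\dent_\lambda(X_2|\cdot)$ and sums to $\dent_\lambda(X_1|I_1)\ge\dent_\lambda(X_1)-\log 2$. So it never needs $\dent_\lambda(X|X\in A)=\dent_\lambda(X)+O_K(1)$ at this stage.
\item You use three cells and invoke that conclusion of \Pref{prop:ent-BSG-part1} for the mixed cell, which is equally valid.
\item Refining the conditioning costs nothing, not $O(1)$: you only need a lower bound on $\dent_\lambda(X_1X_2\mid E^c)$, and extra conditioning can only lower entropy.
\end{itemize}

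\textbf{The last step.} The paper observes that $\mu'=\mu(\cdot\mid A)$ therefore has small entropy doubling. It then reruns the level-set argument of \Pref{prop:ent-BSG-part1} (the sets $A_z^{\pm},A_z^{\circ}$) for $\mu'$ to obtain $\int f_{Z'}^2\,d\lambda\gg_K 1/\lambda(A)$. Finally it writes $f_{Z'}=\lambda(A)^{-2}(\ind{A}*\ind{A})$, which is true only up to $O_K(1)$ factors, since $\mu'$ is merely comparable to uniform on $A$.

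You replace the rerun with Jensen's inequality (collision entropy is at most Shannon entropy), which gives $\int q^2\,d\lambda\ge e^{-\dent_\lambda(Z')}$ in one line. Your Step 1 also turns the comparison of $g*g$ with $\ind{A}*\ind{A}$ into an explicit one-sided inequality. Your route is shorter and does not require re-checking the hypotheses of \Pref{prop:ent-BSG-part1} for $\mu'$. The paper's rerun gives extra information about where $\mu'*\mu'$ concentrates, but none of it is needed for the energy bound.
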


We remark that $A$ itself is precompact, since $\mu$ is compactly supported, but it might not be open. If $A$ is not open, we can use the fact that the Haar measure $\lambda$ is outer regular. This provides an open precompact set $U\supseteq A$ such that $\lambda(U)\le 1.01\lambda(A)$. The proof of the proposition can then be used with $U$ instead of $A$, and this will not interfere with the proof of \Pref{prop:ent-BSG}.

\begin{proof}
Let $X_1,X_2$ be independent $\mu$-random variables, and write $I_j$ to be the indicator of $X_j\in A$ for each $j=1,2$. By assumption,
\begin{align*}
  \dent_{\lambda}(X_1) + \log K & \ge \dent_{\lambda}(X_1X_2) \\
  & \ge \dent_{\lambda}(X_1X_2 | I_1,I_2) \\
  &= \Pr(X_1\in A)\Pr(X_2\in A)\dent_{\lambda}(X_1X_2 | X_1\in A,X_2\in A) \\
  &\phantom{=}+ \Pr(X_1\in A)\Pr(X_2\notin A)\dent_{\lambda}(X_1X_2 | X_1\in A,X_2\notin A) \\
  &\phantom{=}+ \Pr(X_1\notin A)\Pr(X_2\in A)\dent_{\lambda}(X_1X_2 | X_1\notin A,X_2\in A) \\
  &\phantom{=}+ \Pr(X_1\notin A)\Pr(X_2\notin A)\dent_{\lambda}(X_1X_2 | X_1\notin A,X_2\notin A).
\end{align*}
By \Lref{lem:dent-props}, for any two subsets $A_1,A_2\sub G$ we have
\[
\dent_{\lambda}(X_1X_2 | X_1\in A_1,X_2\in A_2) \ge \frac{1}{2}\dent_{\lambda}(X_1 | X_1\in A_1) + \frac{1}{2}\dent_{\lambda}(X_2 | X_2\in A_2),
\]
and thus we have
\begin{align*}
  \dent_{\lambda}(X_1) + \log K & \ge \Pr(X_1\in A)\Pr(X_2\in A)\left(\frac{1}{2}\dent_{\lambda}(X_1 | X_1\in A) + \frac{1}{2}\dent_{\lambda}(X_2 | X_2\in A)\right) \\
  &\phantom{=}+ \Pr(X_1\in A)\Pr(X_2\notin A)\left(\frac{1}{2}\dent_{\lambda}(X_1 | X_1\in A) + \frac{1}{2}\dent_{\lambda}(X_2 | X_2\notin A)\right) \\
  &\phantom{=}+ \Pr(X_1\notin A)\Pr(X_2\in A)\left(\frac{1}{2}\dent_{\lambda}(X_1 | X_1\notin A) + \frac{1}{2}\dent_{\lambda}(X_2 | X_2\in A)\right) \\
  &\phantom{=}+ \Pr(X_1\notin A)\Pr(X_2\notin A)\left(\frac{1}{2}\dent_{\lambda}(X_1 | X_1\notin A) + \frac{1}{2}\dent_{\lambda}(X_2 | X_2\notin A)\right) \\
  & = \Pr(X_1\in A)\dent_{\lambda}(X_1 | X_1\in A) + \Pr(X_1\notin A)\dent_{\lambda}(X_1 | X_1\notin A) \\
  & = \dent_{\lambda}(X_1 | I_1) \\
  & \ge \dent_{\lambda}(X_1) - \dent_{\lambda}(I_1) \\
  & \ge \dent_{\lambda}(X_1) - \log 2.
\end{align*}
In particular,
\[
\Pr(X_1\in A)\Pr(X_2\in A)\left(\dent_{\lambda}(X_1X_2 | X_1\in A,X_2\in A) - \frac{1}{2}\dent_{\lambda}(X_1 | X_1\in A) - \frac{1}{2}\dent_{\lambda}(X_2 | X_2\in A)\right) \le \log K + \log 2,
\]
so using $\Pr(X_i\in A)\asymp_K 1$ and $\dent_{\lambda}(X_1 | X_1\in A)=\dent_{\lambda}(X_2 | X_2\in A)$ we have
\begin{equation}\label{eq:mu'-BSG}
\dent_{\lambda}(X_1X_2 | X_1\in A,X_2\in A) - \dent_{\lambda}(X_1 | X_1\in A) \ll_K  1.
\end{equation}
Let $\mu'$ denote the law of $X_1$ conditioned on $X_1\in A$, let $X'$ be a $\mu'$-random variable, and let $f_{X'}$ denote its density with respect to $\lambda$. Also, let $Z'$ be a random variable with law $\mu'*\mu'$, with density $f_{Z'}$. Then \eqref{eq:mu'-BSG} shows that $\mu'$ satisfies the assumption of \Pref{prop:ent-BSG-part1} (with a different value of $K$ that depends only on $K$). Following the proof, we conclude that
\[
\int_G \int_{x:x,x^{-1}z\in A_z^{\circ}} f_{X'}(x)f_{X'}(x^{-1}z)d\lambda(x)d\lambda(z) \ge \frac{1}{2}
\]
(where $A_z^{\circ}$ is defined using $X'$ rather than $X$). We note that the integrand vanishes unless $x,x^{-1}z\in A_z^{\circ}$, in which case we have $f_{X'}(x),f_{X'}(x^{-1}z)\asymp_K f_{Z'}(z)$ uniformly for every $z\in G$. Therefore
\[
\int_{x:x,x^{-1}z\in A_z^{\circ}} f_{X'}(x)f_{X'}(x^{-1}z)d\lambda(x) \ll_K \lambda(A)f_{Z'}(z)^2
\]
uniformly for every $z\in G$, which implies
\[
\int_G f_{Z'}(z)^2d\lambda(z) \gg_K \frac{1}{\lambda(A)}.
\]
Since $f_{Z'} = \frac{1}{\lambda(A)^2}(\ind{A}*\ind{A})$, it follows that $\En(A,A)\gg_K \lambda(A)^3$, as required.
\end{proof}

We are ready to conclude the proof of \Pref{prop:ent-BSG}.

\begin{proof}[{Proof of \Pref{prop:ent-BSG}}]
Assume that $\dent_{\lambda}(\mu*\mu) \le \dent_{\lambda}(\mu) + \log K$. By \Pref{prop:ent-BSG-part1} and \Pref{prop:ent-BSG-part2}, there exists a subset $A\sub G$ such that $\mu(A)\asymp_K 1$, $f_{\mu}(x)\asymp_K \frac{1}{\lambda(A)}$ uniformly for every $x\in A$, and $\En(A,A)\gg_K \lambda(A)^3$. By \cite[Theorem 5.2]{Tao08}, there exist subsets $A',A''\sub A$ such that $\lambda(A'),\lambda(A'')\gg_K \lambda(A)$ and $\lambda(A'A'')\ll_K \lambda(A)$. But then by \cite[Theorem 4.6]{Tao08}, it follows that there exists an $O_K(1)$-approximate group $H\sub G$ with $\lambda(H)\ll_K \lambda(A)$ and a finite set $X\sub G$ of cardinality at most $O_K(1)$ such that $A'\sub XH$ and $A''\sub HX$. Therefore $\mu(XH)\ge\mu(A')\gg_K 1$, as required.
\end{proof}

\section{Proof for discrete groups}\label{sec:groups}

In this section, $G$ will be a discrete group. In this case $\lambda$ is the counting measure, and $G$ is unimodular.

\begin{lem}\label{lem:make-n-even}
Let $\mu$ be a finitely supported, symmetric, generating probability measure on $G$. Let $H\le G$ be a subgroup, and let $x\in G$. If $\mu^{*n}(xH)\ge\eps$ for some positive integer $n\ge 1$, then $\mu^{*2k}(H)\ge\eps$, where $2k$ is the largest even number less than or equal to $n$.
\end{lem}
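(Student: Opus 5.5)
Two reductions will give the result. First, the odd case reduces to the even one: if $n=2k+1$, then $\mu^{*n}=\mu^{*2k}*\mu$, so
\[
\mu^{*n}(xH)=\sum_{s}\mu(s)\,\mu^{*2k}(xHs^{-1})=\sum_s \mu(s)\,\mu^{*2k}\bigl(xs^{-1}(sHs^{-1})\bigr),
\]
which is awkward because of the conjugate subgroups. I would therefore write the convolution on the left instead, $\mu^{*n}=\mu*\mu^{*2k}$, which gives
\[
\mu^{*n}(xH)=\sum_s\mu(s)\,\mu^{*2k}(s^{-1}xH).
\]
Each term is a left coset of $H$. Hence it suffices to prove that for every even $m=2k$ and every $y\in G$ we have $\mu^{*2k}(yH)\le\mu^{*2k}(H)$. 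Averaging this bound against $\mu(s)$ then yields $\mu^{*2k}(H)\ge\mu^{*n}(xH)\ge\eps$. The case where $n$ is already even is the same inequality with $y=x$. For $k=0$, i.e.\ $n=1$, we need $\mu(xH)\le \delta_1(H)=1$, which is trivial.

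The key step is the inequality $\mu^{*2k}(yH)\le\mu^{*2k}(H)$, which I would prove by the standard Cauchy--Schwarz argument using symmetry. Write $\nu=\mu^{*k}$, so $\mu^{*2k}=\nu*\nu$, and $\check\nu=\nu$ by symmetry of $\mu$. Decompose $G$ into left cosets $gH$, and set $a_{C}=\nu(C)$ for each left coset $C$. Then
\[
\nu*\nu(H)=\sum_{g}\nu(g)\,\nu(g^{-1}H)=\sum_g \nu(g)\,\nu\bigl((Hg)^{-1}\bigr)\ldots
\]
Organizing the sum by cosets is cleaner via right cosets. We have $\nu*\nu(H)=\Pr(X_1X_2\in H)$ with $X_1,X_2$ i.i.d.\ $\nu$. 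The condition $X_1X_2\in H$ holds iff $X_1\in HX_2^{-1}$, i.e.\ iff $X_1$ and $X_2^{-1}$ lie in the same right coset of $H$. Since $X_2^{-1}\sim\nu$ by symmetry, this gives $\nu*\nu(H)=\sum_{C\in H\backslash G}\nu(C)^2$.

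For a general left coset, $X_1X_2\in yH$ iff $X_1\in yHX_2^{-1}$. Writing $yH=(yHy^{-1})y$, this is the condition that $X_1$ and $yX_2^{-1}$ lie in the same right coset of $H^y:=yHy^{-1}$. This gives
\[
\nu*\nu(yH)=\sum_{D\in H^y\backslash G}\nu(D)\,\nu(y^{-1}D),
\]
and $y^{-1}D$ ranges over the right cosets of $H$. The map $D\mapsto y^{-1}Dy$ sends right $H^y$-cosets to right $H$-cosets, but we have $y^{-1}D$, not $y^{-1}Dy$. I expect this bookkeeping to be the main obstacle. I would handle it by pairing directly: $\nu*\nu(yH)=\sum_{C\in H\backslash G}\nu(yC)\,\nu(C')$, where $C=HX_2^{-1}$ and $yC$ is the corresponding set containing $X_1$, using $yHX_2^{-1}=y\,(HX_2^{-1})$. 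Thus $\nu*\nu(yH)=\sum_{C}\nu(yC)\nu(C)$, with $C$ ranging over right cosets of $H$. The sets $yC$ for distinct $C$ are disjoint translates, so left-invariance of the counting measure applies.

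Cauchy--Schwarz then gives
\[
\sum_C\nu(yC)\nu(C)\le\Bigl(\sum_C\nu(yC)^2\Bigr)^{1/2}\Bigl(\sum_C\nu(C)^2\Bigr)^{1/2}.
\]
It remains to bound $\sum_C\nu(yC)^2$. The sets $yC=(yHy^{-1})\,yg$ are exactly the right cosets of $H^y$. So this sum equals $\nu*\nu(H^y)$, and we must compare it with $\nu*\nu(H)$. The cleaner route is to note that $\sum_C\nu(yC)^2=\Pr(X_1,X_1'\in\text{same }yC)$, which by symmetry equals $\Pr(y^{-1}X_1,\,y^{-1}X_1'\text{ in the same right }H\text{-coset})=\Pr(X_1^{-1}X_1'\in\ldots)$. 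This yields $\nu*\nu(H)$ via $X_1^{-1}X_1'\in H$ iff $X_1'\in X_1H$, with symmetry of $\nu$ used once more. If needed, I would redo the whole argument with left cosets throughout so that the conjugation disappears. This would give $\mu^{*2k}(yH)\le\mu^{*2k}(H)$, completing the proof.
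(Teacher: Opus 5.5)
The gap is in your last step, and it cannot be closed: the lemma as stated is false. Most of your argument is correct. The reduction of the odd case via $\mu^{*n}=\mu*\mu^{*2k}$ works. So does the identity $\nu*\nu(yH)=\sum_C\nu(yC)\nu(C)$ over right cosets $C$ of $H$, and so does the Cauchy--Schwarz step. For even $n$ this is, in coordinates, exactly the paper's bound $\mu^{*n}(xH)\le\norm{P^k\ind{H}}_2\norm{P^k\ind{xH}}_2$.

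The failure is the comparison of $\sum_C\nu(yC)^2$ with $\nu*\nu(H)$. As you first computed, $\sum_C\nu(yC)^2=\nu*\nu(yHy^{-1})$. Your ``cleaner route'' then tacitly treats $y^{-1}X_1$ as $\nu$-distributed. Invariance of counting measure does not make $\nu$ translation invariant. In fact $y^{-1}X_1$ and $y^{-1}X_1'$ lie in the same right $H$-coset if and only if $X_1X_1'^{-1}\in yHy^{-1}$, which just returns $\nu*\nu(yHy^{-1})$.

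The inequality $\mu^{*2k}(yH)\le\mu^{*2k}(H)$ that you reduce to is false for non-normal $H$, and so is the lemma. Here is a counterexample with $n=2$:
\begin{itemize}
\item In $\mathrm{F}_2=\sg{a,b}$, let $\mu(b^{\pm1})=\frac16$ and $\mu(ba^jb^{-1})=\frac1{15}$ for $1\le|j|\le5$. This measure is finitely supported, symmetric and generating. Take $H=\sg{a}$.
\item The only products $st$ of two support elements lying in $bH$ are $ba^ib^{-1}\cdot b=ba^i$. So $\mu^{*2}(bH)=\frac23\cdot\frac16=\frac19$.
\item The only such products lying in $H$ are $b\cdot b^{-1}$, $b^{-1}\cdot b$ and $ba^ib^{-1}\cdot ba^{-i}b^{-1}$. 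So $\mu^{*2}(H)=\frac1{18}+\frac2{45}=\frac1{10}<\frac19$.
\item As a consistency check, $\mu^{*2}(bHb^{-1})=\frac12$, and indeed $\frac19\le(\frac1{10}\cdot\frac12)^{1/2}$.
\end{itemize}

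The paper's proof fails at the same point. It asserts that the Schreier graph on $G/H$ is vertex-transitive, giving $\norm{P^k\ind{xH}}_2=\norm{P^k\ind{H}}_2$. Schreier graphs are not transitive in general; in fact $\norm{P^k\ind{xH}}_2^2=P^{2k}(xH,xH)=\mu^{*2k}(xHx^{-1})$.

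What the operator argument genuinely gives, including the odd case via $\norm{P}\le1$, is
$\mu^{*n}(xH)\le\mu^{*2k}(H)^{1/2}\,\mu^{*2k}(xHx^{-1})^{1/2}$.
Hence $\max\set{\mu^{*2k}(H),\mu^{*2k}(xHx^{-1})}\ge\eps$. Your averaging route gives the analogous statement with $xHx^{-1}$ replaced by some conjugate of $H$.

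This weaker conclusion is all that the proof of \Tref{thm:main} needs. Conjugate subgroups have the same index, and $xG_0x^{-1}$ inherits the finite normal subgroup $xNx^{-1}$ with nilpotent quotient of the same rank and class. The lemma itself, however, should be restated in this corrected form rather than proved as written.
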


\begin{proof}
We write $P$ for the Markov operator of induced random walk acting on $L^2(G/H)$. Since $\mu$ is symmetric, the operator $P$ is self-adjoint. We claim that
\begin{equation}\label{eq:ret-norm}
\mu^{*n}(xH) \le \norm{P^k\ind{H}}_2\norm{P^k\ind{xH}}_2.
\end{equation}
Indeed, we prove it depending on the parity of $n$:
\begin{itemize}
  \item If $n=2k$ is even, then
  \[
  \mu^{*n}(xH) = \left\langle \ind{H}, P^{2k}\ind{xH}\right\rangle = \left\langle P^k\ind{H}, P^k\ind{xH}\right\rangle \le \norm{P^k\ind{H}}_2\norm{P^k\ind{xH}}_2.
  \]
  \item Assume now that $n=2k+1$ is odd. In this case, we write
  \[
  \mu^{*n}(xH) = \left\langle \ind{H}, P^{2k+1}\ind{xH}\right\rangle = \left\langle P^k\ind{H}, P^{k+1}\ind{xH}\right\rangle \le \norm{P^k\ind{H}}_2\norm{P^{k+1}\ind{xH}}_2.
  \]
  Since $P$ is self-adjoint, $\norm{P}\le 1$, and thus $\norm{P^{k+1}\ind{xH}}_2\le \norm{P^k\ind{xH}}_2$. This shows that \eqref{eq:ret-norm} holds.
\end{itemize}

We now conclude the proof of the lemma. Since the Schreier graph of $G/H$ with respect to $\mu$ is transitive, we have $\norm{P^k\ind{xH}}_2=\norm{P^k\delta_{H}}_2$, and thus
\[
\mu^{*n}(xH) \le \norm{P^k\ind{H}}_2^2 = \mu^{*2k}(H)
\]
which shows that $\mu^{*2k}(H)\ge\eps$, as required.
\end{proof}

\begin{proof}[Proof of \Tref{thm:main}]
Let $\mu$ be a finitely supported, symmetric, generating probability measure on $G$ such that
\[
\Hent(\mu^{*2n}) \le \Hent(\mu^{*n}) + \log K
\]
for a sufficiently large value of $n$ (which will be chosen later depending only on $K$). By \Pref{prop:ent-BSG}, there exist an $C_1(K)$-approximate group $H\sub G$ and a finite set $X$ of cardinality at most $C_2(K)$, such that $\mu^{*n}(XH)\ge c(K)$.

We now use the structure theorem of Breuillard, Green and Tao \cite[Theorem~1.6]{BreuillardGreenTao12} to deduce that there exists a subgroup $G_0\le G$, a finite normal subgroup $N\vartriangleleft G_0$ and a finite set $X'\sub G$ of cardinality at most $C_3(K)$ such that $H\sub X'G_0$, $G_0/N$ is nilpotent and finitely generated of rank and step at most $O_K(1)$, and $G_0\sub\sg{H}$.

It follows that $\mu^{*n}(XX'G_0)\ge\mu^{*n}(XH)\ge c(K)$, so in particular there exists some $x\in XX'$ such that $\mu^{*n}(xG_0) \ge \frac{c(K)}{C_2(K)C_3(K)}\eqqcolon\eps(K)$. By \Lref{lem:make-n-even}, we have $\mu^{*2k}(G_0)\ge\eps(K)$, where $2k$ is the largest even number less than or equal to~$n$. We write $c\coloneqq(\mu*\mu)_{\min}\ge\mu_{\min}^2\ge p^2$. By \cite[Proof of Theorem 1.11]{Tointon20}, if $k\ge 1+\frac{32(1-c)^2}{c^4\eps(K)^2/9}$, then $\mu^{*2k}(L)\le\frac{2}{3}\eps$ for every subgroup $L$ with $[G:L]\ge\frac{\eps(K)}{3}$. Therefore $[G:G_0]<\frac{\eps(K)}{3}$, concluding the proof.
\end{proof}

We now turn to prove the applications for groups.

\begin{proof}[Proof of \Cref{cor:app-gromov}]
Assume that $\Hent(\mu^{*n}) \le C\log n$ for some $n\ge n_1=n_1(C,p)$ (which will be chosen later). Write $t=\floor{\frac{1}{2}\log_2 n}$ and $r=\floor{\sqrt{n}}$. Since
\[
\sum_{i=1}^t\left(\Hent(\mu^{*2^ir}) - \Hent(\mu^{*2^{i-1}r})\right) = \Hent(\mu^{*2^tr}) - \Hent(\mu^{*r}) \le \Hent(\mu^{*n}) \le C\log n,
\]
there exists some $1\le i\le t$ such that
\[
\Hent(\mu^{*2^ir}) - \Hent(\mu^{*2^{i-1}r}) \le \frac{C}{t}\log n \le \frac{C\log n}{\frac{1}{2}\log_2 n-1} = \frac{2C\log 2}{1-\frac{2}{\log_2 n}} \le 4C\log 2
\]
where the last inequality holds when $n\ge 16$. We therefore take $K=4C\log 2$, and let $n_0=n_0(K,p)$ be the value of $n_0$ from \Tref{thm:main}. Then by choosing $n_1 \ge \max\set{1+n_0^2,16}$, the corollary follows from \Tref{thm:main}.
\end{proof}

\begin{proof}[Proof of \Cref{cor:gap}]
Fix $0<p<1$. For each $j\ge 1$, let $m_j$ be the value of $n_0(j,p)$ from \Cref{cor:app-gromov}. We may assume that $m_0\coloneqq 0\le m_1\le m_2\le\cdots$. We define the function $f_p\colon\N\to\R$ by $f_p(n)=j\log n$ for the unique value of $j$ such that $m_j\le n<m_{j+1}$. It is clear that $\lim_{n\to\infty}\frac{f_p(n)}{\log n}=\infty$. If $G$ is a non-virtually nilpotent group, and $\mu$ is a finitely supported, symmetric, generating probability measure on $G$ with $\mu_{\min}\ge p$, then \Cref{cor:app-gromov} forces $\Hent(\mu^{*n})\ge j\log n=f_p(n)$ whenever $m_j\le n<m_{j+1}$, as required.
\end{proof}

\begin{proof}[Proof of \Cref{cor:app-epsilon}]
Suppose that
\[
\Hent(\mu^{*\ceil{(1+\eps)n}}) \le \Hent(\mu^{*n}) + \log K.
\]
By \cite{KaimanovichVershik83}, the function $n\mapsto\Hent(\mu^{*n})$ is concave. Therefore
\[
\Hent(\mu^{*2n}) - \Hent(\mu^{*n}) \le \frac{1}{\eps}\left(\Hent(\mu^{*\ceil{(1+\eps)n}}) - \Hent(\mu^{*n})\right) \le \frac{1}{\eps}\log K.
\]
The corollary now follows from \Tref{thm:main}.
\end{proof}

\section{Proof for vertex-transitive graphs}\label{sec:graphs}

In this section, we prove \Tref{thm:main-graphs}. We fix some notations for this section. Let $\Gamma=(V,E)$ be a connected locally finite vertex-transitive graph of degree $d$ with root $o$. We write $R_n$ for the simple random walk on $\Gamma$ starting from $o$. Let $G=\Aut(\Gamma)$ be the automorphism group of $\Gamma$, which is a locally compact group. For every $v\in V$, we write
\[
G_v = \set{g\in G\suchthat gv=v}
\]
for the stabilizer of $v$ in $G$, and for every $v,w\in V$ we write
\[
G_{v,w} = \set{g\in G\suchthat gv=w}.
\]

We recall that the graph $\Gamma$ is called \textbf{unimodular} if $\left|G_vw\right| = \left|G_wv\right|$ for every $v,w\in V$. In this case $G$ is also unimodular, so any left Haar measure on $G$ is also a right Haar measure. We will prove \Tref{thm:main-graphs} by dividing it to two cases, depending on whether $\Gamma$ is unimodular or not.

\subsection{The unimodular case}

We assume that $\Gamma$ is unimodular, and let $\lambda$ be a Haar measure on~$G$, normalized so that $\lambda(G_o)=1$. Taking $g_{v,o}\in G_{v,o}$ and $g_{o,w}\in G_{o,w}$, we have that $G_{v,w}=g_{o,w}G_og_{v,o}$, so $\lambda(G_{v,w})=1$ for every $v,w\in V$.

We define the probability measure $\mu=\frac{1}{d}\sum_{v\sim o}\lambda|_{G_{o,v}}$ on $G$.

\begin{prop}\label{prop:graphs-mu-props}
Suppose that $\Gamma$ is unimodular. Then:
\begin{enumerate}
  \item $\mu$ is symmetric and compactly supported.
  \item $\dent_{\lambda}(\mu^{*n})=\Hent(R_n)$ for every $n\ge 1$.
\end{enumerate}
\end{prop}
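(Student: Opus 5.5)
We treat the two parts of the statement separately, relying throughout on the orbit map $\pi\colon G\to V$, $g\mapsto go$. The key structural fact is that $\pi^{-1}(v)=G_{o,v}$, and each such fibre is a compact open coset of $G_o$ with $\lambda(G_{o,v})=1$ by our normalization and unimodularity. With this, $\mu$ has density $f_\mu=\frac1d\sum_{v\sim o}\ind{G_{o,v}}$, which is constant equal to $\frac1d$ on $\pi^{-1}(N(o))$ and vanishes elsewhere.

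For (1), compact support is immediate: the support is a finite union of the $d$ compact cosets $G_{o,v}$. For symmetry, we note that $G_{o,v}^{-1}=G_{v,o}$. We need $\frac1d\sum_{v\sim o}\lambda|_{G_{v,o}}=\mu$ as measures; here inversion preserves $\lambda$ because $G$ is unimodular. We will show that $\bigcup_{v\sim o}G_{v,o}=\bigcup_{w\sim o}G_{o,w}$, namely that both sets equal $\set{g\suchthat go\sim o}$. Indeed, if $gv=o$ with $v\sim o$, then $go\sim gv=o$. Conversely, if $go=w\sim o$, then $g^{-1}o\sim g^{-1}w=o$, so $g\in G_{g^{-1}o,o}$. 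Since both measures equal $\frac1d\lambda$ restricted to this set, symmetry follows.

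For (2), we first identify $\mu^{*n}$. If $g_1,\dots,g_n$ are i.i.d.\ $\mu$, we claim that $g_1\cdots g_n o$ has the law of $R_n$ and that, conditionally on $\pi(g_1\cdots g_n)=v$, the product is uniformly distributed with respect to $\lambda$ on $G_{o,v}$. We argue by induction, writing $\mu^{*n}=\mu^{*(n-1)}*\mu$. Left-translating the uniform measure on $G_{o,w}$ by $h\in G_{o,u}$ yields the uniform measure on $hG_{o,w}=G_{o,hw}$, using left invariance of $\lambda$. It is cleaner, however, to use the last factor: $\mu^{*n}=\mu*\mu^{*(n-1)}$, so that $g_1\cdot(g_2\cdots g_n)$ with $g_2\cdots g_n\,o\sim R_{n-1}$ uniform on fibres, and left multiplication by $g_1\in G_{o,u}$ maps the uniform measure on $G_{o,x}$ to the uniform measure on $G_{o,g_1x}$. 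We must then check that $g_1 x$ is distributed as a uniform neighbour-step of the walk. This is the main obstacle, since $g_1$ acts on the whole trajectory and not just one step. We handle it using the first factor: for fixed $g_2\cdots g_n=h$, the point $g_1 h o$ is not obviously a simple random walk step. So instead we use the right factor, $g_1\cdots g_{n-1}\cdot g_n$. Given $g_1\cdots g_{n-1}=h$ with $ho=u$, the point $hg_no=h w$ ranges over $w\sim o$ uniformly, and $h$ maps $N(o)$ bijectively onto $N(u)$; hence this is exactly a simple random walk step from $u$.

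For the uniformity on fibres, we use the density formula $f_{\mu^{*n}}(g)=\int f_{\mu^{*(n-1)}}(h)f_\mu(h^{-1}g)\,d\lambda(h)$. By induction, $f_{\mu^{*(n-1)}}$ is constant on right $G_o$-cosets, that is, it is a function of $ho$. Then $f_\mu(h^{-1}g)=\frac1d\ind{}[h^{-1}go\sim o]$ depends on $g$ only through $go$. Hence $f_{\mu^{*n}}(g)$ depends only on $go$, and since each fibre has mass $\lambda=1$, we get $f_{\mu^{*n}}(g)=\Pr(R_n=go)$. Therefore
\[
\dent_\lambda(\mu^{*n})=-\sum_v \lambda(G_{o,v})\Pr(R_n=v)\log\Pr(R_n=v)=\Hent(R_n).
\]
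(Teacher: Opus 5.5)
Your proposal is correct and follows essentially the same route as the paper. For (1) you identify the support as $\{g\in G : go\sim o\}$ and use inversion-invariance of $\lambda$. For (2) you use the right $G_o$-invariance of $\mu$, made explicit through the convolution density formula, so that $f_{\mu^{*n}}$ is constant on the measure-one cosets $G_{o,v}$; you then match the coset masses with $\Pr(R_n=v)$ via the last-factor Markov step, a point the paper leaves implicit. Two tidying remarks: drop the abandoned first-factor detour, and note that the induction hypothesis on $f_{\mu^{*(n-1)}}$ is unnecessary, since $f_\mu(h^{-1}g)$ alone already makes the integral depend on $g$ only through $go$.
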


\begin{proof}
\begin{enumerate}
  \item Let $S = \supp(\mu) = \bigcup_{v\sim o}G_{o,v} = \set{g\in G\suchthat go\sim o}$, which is compact since each $G_{o,v}$ is compact, and symmetric since $go\sim o\iff g^{-1}o\sim o$. Then $\mu(A)=\frac{1}{d}\lambda(A\cap S)$ for any measurable $A\sub G$. Hence $\mu(A^{-1}) = \frac{1}{d}\lambda(A^{-1}\cap S) = \frac{1}{d}\lambda((A\cap S)^{-1}) = \frac{1}{d}\lambda(A\cap S)=\mu(A)$, where the second equality used the symmetry of $S$, and the third used the symmetry of $\lambda$ (which follows from the unimodularity assumption).
  \item By definition, the measure $\mu$ is right $G_o$-invariant. Therefore, the density of $\mu^{*n}$ with respect to $\lambda$ is constant on each left coset $G_{o,v}$ of $G_o$, and thus it must be equal to $f = \sum_{v\in V}\Pr(R_n=v)\ind{G_{o,v}}$, hence
      \begin{align*}
        \dent_{\lambda}(\mu^{*n}) &= -\int_G f(x)\log f(x)d\lambda(x) \\
        &= -\sum_{v\in V}\Pr(R_n=v)\log\Pr(R_n=v)\lambda(G_{o,v}) = \Hent(R_n),
      \end{align*}
      as required. \qedhere
\end{enumerate}
\end{proof}

We also need the following lemma, showing that the index of open subgroups of a unimodular locally compact group can be identified using random walks.

\begin{lem}\label{lem:tointon-lc}
Let $G$ be a unimodular locally compact group, and let $\mu$ be a finitely supported, symmetric probability measure on $G$. Let $G_0 \le G$ be an open subgroup. If $\mu^{*2k}(G_0) \ge \eps$ for some integer $k \ge 1$, then the index $[G:G_0]$ is bounded by the same uniform constant as in the discrete case.
\end{lem}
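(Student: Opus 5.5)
The idea is to reduce to the discrete case by passing to the coset space $G/G_0$, which is countable because $G_0$ is open. The random walk on $G$ projects to a Markov chain on this countable set, and Tointon's argument \cite[Proof of Theorem 1.11]{Tointon20} only uses the spectral properties of that Markov chain.

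Concretely, let $P$ be the Markov operator on $\ell^2(G/G_0)$ given by $P\varphi(xG_0)=\int_G \varphi(xgG_0)\,d\mu(g)$. This is well defined because $x\mapsto \mu(x^{-1}yG_0)$ only depends on the cosets of $x$ and $y$; it uses that $G_0$ is a subgroup, so $x$ and $xh$ with $h \in G_0$ give the same distribution of the next coset. Symmetry of $\mu$ makes $P$ self-adjoint with respect to the counting measure on $G/G_0$, i.e.\ $p(xG_0,yG_0)=p(yG_0,xG_0)$. So the counting measure is reversing, exactly as for Schreier graphs in the discrete case. This is where unimodularity enters: it guarantees that counting measure on the coset space is the correct stationary measure, and it matches the symmetry identity $\mu(A^{-1})=\mu(A)$ used in \Pref{prop:graphs-mu-props}. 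The chain is transitive, since $G$ acts on $G/G_0$ by left multiplication commuting with $P$. The return probability is $\mu^{*2k}(G_0)=\langle \delta_{G_0}, P^{2k}\delta_{G_0}\rangle=\norm{P^k\delta_{G_0}}_2^2$, just as in \Lref{lem:make-n-even}.

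Next, rerun Tointon's argument verbatim on this transitive reversible chain. That argument only uses the following ingredients:
\begin{itemize}
\item the lazy/holding lower bound $c=(\mu*\mu)_{\min}$, read here as the minimal nonzero transition probability $\min\{p^{(2)}(v,w): p^{(2)}(v,w)>0\}$ of $P^2$ (in the finitely supported case this is at least $\mu_{\min}^2$);
\item self-adjointness of $P$;
\item transitivity.
\end{itemize}
With these, it shows that if the index $m=|G/G_0|$ is large, then $\norm{P^k\delta_{G_0}}_2^2$ drops below $\frac{2}{3}\eps$ once $k\ge 1+\frac{32(1-c)^2}{c^4\eps^2/9}$. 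The mechanism is that the diagonal entries of $P^{2k}$ on $\{G_0\}$ are bounded by $1/m$ plus a spectral-gap-type term that decays in $k$. The same threshold $k$ and the same index bound $\frac{3}{\eps}$ therefore apply, which is what "the same uniform constant" means.

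The main obstacle is checking that every step in Tointon's proof uses only quantities on the quotient chain and not the group structure of $G$ itself, in particular the Cayley-graph step where one compares $\mu$ with its "mixing" in the coset graph. If some step used the existence of a group element mapping one coset to another while preserving $\mu$, I would replace it by the left translation $x\mapsto gx$ on $G/G_0$. This is a chain automorphism because $\mu$ acts on the right.

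A second point is that in the graph application $\mu$ is not finitely supported but only has finitely many transition targets on $G/G_0$. So I would state the hypothesis as finiteness of the induced transition kernel, and note that the parameter $c$ then means the minimal nonzero entry of $P^2$.
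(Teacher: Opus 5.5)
Your route is the paper's: pass to the discrete coset space, note that the induced kernel is symmetric, and rerun Tointon's argument there. However, two of the concrete steps you add are false as written.

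\emph{Well-definedness.} $P\varphi(xG_0)=\int_G\varphi(xgG_0)\,d\mu(g)$ is not well defined on $G/G_0$. Replacing $x$ by $xh$ with $h\in G_0$ replaces $\mu$ by $\delta_h*\mu$, and $\mu(h^{-1}x^{-1}yG_0)\neq\mu(x^{-1}yG_0)$ in general. For example, take $S_3$ with $G_0=\{e,(12)\}$ and $\mu=\frac12(\delta_{(12)}+\delta_{(13)})$: the representative $e$ moves to $(13)G_0$, while the representative $(12)$ moves to $(12)(13)G_0\neq(13)G_0$. The paper's kernel $\mu(x^{-1}yG_0)$ has the same defect. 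The fix is to project $R_n=X_1\cdots X_n$ to right cosets, $G_0x\mapsto G_0xg$, with kernel $\mu(x^{-1}G_0y)$. This kernel is symmetric using only $\mu(A^{-1})=\mu(A)$, and $\mu^{*2k}(G_0)=P^{2k}(G_0,G_0)=\norm{P^k\delta_{G_0}}_2^2$.

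\emph{Transitivity.} This chain is not transitive. Translations are chain automorphisms only if $\mu$ is invariant under the corresponding conjugations. Already the three-point Schreier graph of $S_3$ for $\mu$ uniform on $\{(12),(23)\}$ is a path with loops only at its two ends. So transitivity cannot be one of your ingredients, and your fallback (left translation as a chain automorphism) fails. It is also not needed: the paper applies Tointon's bound to arbitrary subgroups of discrete groups, whose coset chains are in general not transitive either.

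\emph{Uniformity.} You are right that ``finitely supported'' does not cover the measure $\mu=\frac1d\lambda|_S$ of the graph case. But reading $c$ as the least nonzero entry of $P^2$ on the coset chain destroys uniformity. The density of $\mu*\mu$ is constant on left cosets of $G_o$. Each such coset meets $x^{-1}G_0x$ in a set of measure $0$ or $\lambda(G_o\cap x^{-1}G_0x)$. Hence the two-step holding probability at the state $G_0x$ is $(\mu*\mu)(x^{-1}G_0x)\le[G_{xo}:G_{xo}\cap G_0]^{-1}$.

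The hypothesis $\mu^{*2k}(G_0)\ge\eps$ gives $[G_o:G_o\cap G_0]\le 1/\eps$, so it controls this index only when $xo\in G_0o$. Nothing in your argument bounds it at other states before you know that $[G:G_0]$ is bounded. So your $c$, and with it the threshold $1+\frac{32(1-c)^2}{c^4\eps^2/9}$, depends on $G_0$, and the claim that ``the same threshold'' applies does not follow. A uniform argument has to feed in quantities controlled by $\mu$ itself, such as the two-step return probability $\frac1d$ of the simple random walk, rather than minimal entries of the quotient chain. The paper's one-line proof is silent on this point as well.
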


\begin{proof}
Since $G_0$ is an open subgroup of $G$, the left coset space $X = G/G_0$ is discrete. The measure $\mu$ induces a random walk on $X$ with transition probabilities $p(xG_0, yG_0) = \mu(x^{-1}y G_0)$. Because $G$ is unimodular, the counting measure on $X$ is $G$-invariant. Furthermore, since $\mu$ is symmetric, the transition operator of the induced random walk is self-adjoint on $L^2(X)$, making the walk reversible. The uniform bounds on subgroup index established by Tointon \cite[Theorem 1.11]{Tointon20} rely only on the discreteness of the state space and the reversibility of the Markov chain. As our induced random walk on $X$ satisfies both conditions, Tointon's arguments apply verbatim, yielding the required uniform bound on $|X| = [G:G_0]$.
\end{proof}

We can now prove \Tref{thm:main-graphs} for unimodular graphs.

\begin{proof}[Proof of \Tref{thm:main-graphs}, unimodular case]
Assume that $\Gamma$ is unimodular. By assumption,
\[
\Hent(R_{2n}) \le \Hent(R_n) + \log K
\]
for a sufficiently large value of $n$, which will be chosen later. By \Pref{prop:graphs-mu-props}, we have
\[
\dent_{\lambda}(\mu^{*2n}) \le \dent_{\lambda}(\mu^{*n}) + \log K.
\]

We repeat the beginning of the proof of \Tref{thm:main}. This yields a subgroup $G_0\le G$, a finite normal subgroup $N\vartriangleleft G_0$, and an element $x\in G$, such that $\mu^{*n}(xG_0)\ge\eps(K)>0$. We also note that $\lambda(G_0)>0$, hence $G_0$ is open by Steinhaus' theorem. \Lref{lem:make-n-even} shows that we may assume that $\mu^{*2k}(G_0)\ge\eps(K)$, where~p$2k$ is the largest even number less than or equal to $n$. We now use \Lref{lem:tointon-lc}, so we must have $[G:G_0]\le\frac{2}{\eps(K)}$, showing that $G$ is virtually nilpotent. Therefore $\Gamma$ has polynomial growth.
\end{proof}

\subsection{The non-unimodular case}

When $\Gamma$ is not unimodular, then $G$ is not amenable by \cite{SoardiWoess90}, and thus the entropy $\Hent(R_n)$ grows linearly with $n$. In this subsection, we give a universal linear lower bound on this entropy which depends only on the degree $d$. To do this, we write
\[
h = \lim_{n\to\infty}\frac{1}{n}\Hent(R_n)
\]
for the asymptotic entropy of $R_n$.

\begin{prop}\label{prop:non-uni-ent-bound}
Suppose that $\Gamma$ is not unimodular. Then
\[
h \ge -\frac{1}{2}\log\left(1-\frac{1}{d^2}\right) \ge \frac{1}{2d^2}.
\]
\end{prop}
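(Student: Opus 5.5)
The plan is to bound the spectral radius $\rho$ of the simple random walk on $\Gamma$ away from $1$ in terms of $d$ alone, and then convert this into an entropy bound.

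\textbf{From spectral radius to entropy.} Since $\Gamma$ is $d$-regular, the simple random walk is reversible with respect to counting measure. Hence its Markov operator $P$ is self-adjoint on $\ell^2(V)$, and
\[
\Pr(R_n=v)=\langle \delta_o,P^n\delta_v\rangle\le \rho^n .
\]
It follows that $\Hent(R_n)=-\E\log\Pr(R_n=v)|_{v=R_n}\ge -n\log\rho$, and therefore $h\ge-\log\rho$. It thus suffices to show $\rho^2\le 1-\frac{1}{d^2}$. The second inequality in the statement, $-\frac12\log(1-\frac1{d^2})\ge \frac1{2d^2}$, is just $-\log(1-x)\ge x$.

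\textbf{Bounding $\rho$ by a Schur test.} I would use the modular cocycle
\[
\Delta(x,y)=\frac{|G_x y|}{|G_y x|}.
\]
It is multiplicative along paths, so $m(x)\coloneqq\Delta(o,x)$ is a well-defined positive function with $m(y)/m(x)=\Delta(x,y)$. For the positive weight $w=m^{1/2}$, the Schur test gives
\[
\rho\le\sup_x\frac{(Pw)(x)}{w(x)}=\sup_x\frac1d\sum_{y\sim x}\sqrt{\Delta(x,y)}.
\]
Next I would group the neighbours $y$ of $x$ into $G_x$-orbits $O$, with sizes $|O|$ and ratio $r_O=\Delta(x,y)^{-1}=|G_yx|/|G_xy|$, constant on each orbit. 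Set $a_O=|O|/d$, so that $\sum_O a_O=1$. By transitivity, the orbits of $G_x$ on $N(x)$ are in bijection with the orbits of the various $G_y$ containing $x$, and this should give $\sum_O a_O r_O=1$ (a finite mass-transport identity). Applying the same identity to the reciprocal ratios, the Schur quantity becomes a weighted mean of $\sqrt{r_O}$ (or of $\sqrt{1/r_O}$, whichever we choose). Then
\[
\sum_O a_O\sqrt{r_O}=1-\tfrac12\sum_O a_O\bigl(\sqrt{r_O}-1\bigr)^2 .
\]

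\textbf{The main obstacle: quantifying non-unimodularity.} The hard step is to get a deficit that depends only on $d$. Non-unimodularity forces some orbit with $r_O\ne1$. Such an orbit has $a_O\ge 1/d$, and $r_O=p/q$ with $p,q\le d$ distinct integers. This yields $|\sqrt{r_O}-1|$ bounded below in terms of $d$.

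To reach the exact constant $\rho^2\le1-\frac1{d^2}$, I expect to need some optimisation. First I would try replacing the weight $m^{1/2}$ by $m^{t}$ and optimising over $t$, rather than fixing the square root. I would also exploit that $r_O>1$ and $r_O<1$ orbits must balance, because $\sum a_Or_O=1$.

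If the Schur route cannot be made to give $1-\frac1{d^2}$ exactly, the fallback is an isoperimetric argument. For a finite set $A$, use the same cocycle (mass transport between $A$ and its complement, weighted by $\Delta$) to show an edge-boundary bound $|\partial_E A|\ge |A|/d$ in the normalisation where the Cheeger constant is $\iota=|\partial_EA|/(d|A|)\ge1/d^2$. Then apply the standard bound $\rho\le\sqrt{1-\iota^2}$, or its variant, to conclude $\rho^2\le1-\frac1{d^2}$.
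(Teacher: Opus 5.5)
Your reduction from $\rho$ to $h$ via $\Pr(R_n=v)\le\rho^n$ is fine, and a bit more direct than the paper's use of return probabilities. The Schur step, however, uses the wrong weight. With $w=m^{1/2}$ the Schur quantity is $\frac1d\sum_{y\sim o}\sqrt{\Delta(o,y)}=\sum_O a_O r_O^{-1/2}$. Since $\sum_O a_O=\sum_O a_O r_O=1$, Jensen's inequality for the convex function $t\mapsto t^{-1/2}$ gives $\sum_O a_O r_O^{-1/2}\ge(\sum_O a_Or_O)^{-1/2}=1$. So this bound is vacuous, and it is not ``whichever we choose''. You need $w=m^{-1/2}$, which gives $\sum_O a_O\sqrt{r_O}=\frac1d\sum_i\sqrt{a_ib_i}$, where $a_i=|G_ov_i|$ and $b_i=|G_{v_i}o|$ over the $G_o$-orbits $O_i\ni v_i$ of neighbours of $o$. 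Your identity $\sum_O a_Or_O=1$ is just $\sum_i b_i=d$, obtained by counting the edges into $o$. After this fix, your single weighted Schur test gives exactly the bound $\rho\le\frac1d\sum_i\sqrt{a_ib_i}$. The paper obtains the same bound by writing $A=\sum_iA_i$ over orbit digraphs with out-degree $a_i$ and in-degree $b_i$, applying Schur's test with constant weights to each $A_i$, and using the triangle inequality.

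The step that carries all the content, $\sum_i\sqrt{a_ib_i}\le\sqrt{d^2-1}$ whenever $\vec a\ne\vec b$, is left as ``some optimisation''. The single-orbit estimate you sketch falls short. If $a_O\ge\frac1d$ and $|\sqrt{r_O}-1|$ are bounded separately, the deficit is only of order $d^{-3}$. Using $a_O(\sqrt{r_O}-1)^2=(\sqrt{a_j}-\sqrt{b_j})^2/d$, it is at best about $\frac1{8d^2}$. This yields neither $\rho^2\le1-\frac1{d^2}$ (which forces $\rho<1-\frac1{2d^2}$) nor even $h\ge\frac1{2d^2}$. Changing the exponent of $m$ does not by itself supply the estimate.

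The missing idea is the paper's merging lemma:
\begin{itemize}
\item By $\sqrt{x_1y_1}+\sqrt{x_2y_2}\le\sqrt{(x_1+x_2)(y_1+y_2)}$, merge all orbits with $a_i\le b_i$ into one pair $(a,b)$, and the rest into $(d-a,d-b)$, where $a<b$ are integers.
\item Set $u=\frac{a+b}2$ and $r=\frac{b-a}2\ge\frac12$. Concavity of $x\mapsto\sqrt{x^2-r^2}$ gives $\sqrt{ab}+\sqrt{(d-a)(d-b)}\le\sqrt{d^2-4r^2}\le\sqrt{d^2-1}$.
\end{itemize}

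Your isoperimetric fallback does not close as written either: $\iota\ge\frac1{d^2}$ in $\rho\le\sqrt{1-\iota^2}$ only gives $\rho^2\le1-\frac1{d^4}$. It can be repaired into a genuinely different proof. Take the digraph $\Gamma_j$ of edges $(go,gv_j)$, $g\in G$, for an orbit with $a_j>b_j$. For a finite set $A$, at most $b_j|A|$ of the $a_j|A|$ edges of $\Gamma_j$ leaving vertices of $A$ can end in $A$. Hence $|\partial_EA|\ge|A|$, i.e.\ $\iota\ge\frac1d$. The bound $\rho\le\sqrt{1-\iota^2}$ you quote then gives exactly $\rho^2\le1-\frac1{d^2}$.
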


\begin{proof}
Let $P$ denote the Markov operator of the random walk on $\Gamma$. Then $P=\frac{1}{d}A$, where $A$ is the adjacency matrix of $\Gamma$. Since $P$ is reversible, the spectral radius of~$P$ is $\rho(P)=\norm{P}_{2\to 2} = \frac{1}{d}\norm{A}_2$.

The action of $G_o$ on the $d$ neighbors of $o$ divides them into orbits $O_1,\dots,O_t$. For each $1\le i\le t$, choose a representative $v_i\in O_i$. For each $1\le i\le t$, consider the directed subgraph $\Gamma_i$ induced from all edges in $\Gamma$ of the form $(go,gv_i)$ for $g\in G$, and let $A_i$ denote its adjacency matrix. Then $A=\sum_{i=1}^t A_i$. Each vertex in $\Gamma_i$ has out-degree $a_i\coloneqq\left|G_ov_i\right|$, and in-degree $b_i\coloneqq\left|G_{v_i}o\right|$. We note that $\sum_{i=1}^t a_i = \sum_{i=1}^t b_i = d$. Furthermore, since $\Gamma$ is not unimodular, there exists some $1\le j\le t$ such that $a_j\ne b_j$.

By Schur's test, for each $1\le i\le t$ we have $\norm{A_i}_2\le\sqrt{a_ib_i}$, so by the triangle inequality we have
\[
\rho(P) = \frac{1}{d}\norm{A}_2 \le \frac{1}{d}\sum_{i=1}^t\norm{A_i}_2 \le \frac{1}{d}\sum_{i=1}^t\sqrt{a_ib_i}.
\]
We claim that $\rho(P)\le\sqrt{1-\frac{1}{d^2}}$. This will follow from the following lemma:

\begin{lem}
Let $a_1,\dots,a_t,b_1,\dots,b_t$ be positive integers such that $\sum_{i=1}^t a_i = \sum_{i=1}^t b_i = d$, and $\vec{a}\ne\vec{b}$. Then
\[
\sum_{i=1}^t\sqrt{a_ib_i} \le \sqrt{d^2-1}.
\]
\end{lem}

\begin{proof}
We write $I_{\le}=\set{1\le i\le t\suchthat a_i\le b_i}$ and $I_{>}=\set{1\le i\le t\suchthat a_i>b_i}$. By Cauchy-Schwarz, $\sqrt{x_1y_1} + \sqrt{x_2y_2} \le \sqrt{(x_1+x_2)(y_1+y_2)}$ for all $x_i,y_i\ge 0$. Therefore, if both $i,j\in I_{\le}$ (or $i,j\in I_{>}$), replacing $(a_i,b_i)$ and $(a_j,b_j)$ with $(a_i+a_j,b_i+b_j)$ does not decrease $\sum_{i=1}^t\sqrt{a_ib_i}$. Writing $a=\sum_{i\in I_{\le}}a_i$ and $b=\sum_{i\in I_{>}}b_i$, we deduce that
\[
\sum_{i=1}^t\sqrt{a_ib_i} \le \sqrt{ab} + \sqrt{(d-a)(d-b)}
\]
and $a<b$. Substituting $u=\frac{a+b}{2}$ and $r=\frac{b-a}{2}$, we have
\[
\sqrt{ab} + \sqrt{(d-a)(d-b)} = \sqrt{u^2-r^2} + \sqrt{(d-u)^2-r^2}.
\]
For a given $r\ge 0$, the function $x\mapsto\sqrt{x^2-r^2}$ is concave. Hence
\[
\sqrt{u^2-r^2} + \sqrt{(d-u)^2-r^2} \le 2\sqrt{\frac{d^2}{4}-r^2} = \sqrt{d^2-4r^2}.
\]
Also, since $b\ge a+1$ we have $r\ge\frac{1}{2}$, hence $\sqrt{d^2-4r^2}\le\sqrt{d^2-1}$. Together this concludes the proof of the lemma.
\end{proof}
We can now finish the proof of the proposition. We note that
\[
\frac{1}{n}\Hent(R_n) \ge -\frac{1}{n}\log\max_{v\in V}\Pr(R_n=v) \ge -\frac{1}{2n}\log\sum_{v\in V}\Pr(R_n=v)^2 = -\log \Pr(R_{2n}=o)^{1/2n}.
\]
Taking the limit of both sides, we deduce
\[
h = \lim_{n\to\infty}\frac{1}{n}\Hent(R_n) \ge -\log \lim_{n\to\infty}\Pr(R_{2n}=o)^{1/2n} = -\log\rho(P) \ge -\frac{1}{2}\log\left(1-\frac{1}{d^2}\right) \ge \frac{1}{2d^2},
\]
where the last inequality follows from $\log(1+x)\le x$.
\end{proof}

\begin{cor}\label{cor:non-uni-diffent-bound}
When $\Gamma$ is not unimodular, we have
\[
\Hent(R_{2n}) - \Hent(R_n) \ge \frac{n}{2d^2}.
\]
for every $n\ge 1$.
\end{cor}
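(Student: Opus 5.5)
The plan is to show that the increments $\Delta_m \coloneqq \Hent(R_{m+1}) - \Hent(R_m)$ are non-increasing in $m$ and converge to the asymptotic entropy $h$. Then each of the $n$ increments making up $\Hent(R_{2n})-\Hent(R_n)$ is at least $h$, and \Pref{prop:non-uni-ent-bound} finishes the argument.

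\emph{Step 1 (monotone increments).} I would use the standard Kaimanovich--Vershik argument, already used in the proof of \Cref{cor:app-epsilon}, now for the simple random walk on the vertex-transitive graph $\Gamma$. Write $R_{m+1}$ as the walk after one first step $R_1$ followed by $m$ further steps. The Markov property gives $\Hent(R_{m+1}) - \Hent(R_m) = \Hent(R_{m+1}) - \Hent(R_{m+1}\mid R_1)$, since by vertex-transitivity the conditional entropy $\Hent(R_{m+1}\mid R_1=v)$ equals $\Hent(R_m)$. This difference is the mutual information $\II(R_1;R_{m+1})$. The Markov chain $R_1 \to R_{m+1} \to R_{m+2}$ and the data processing inequality give $\II(R_1;R_{m+2})\le \II(R_1;R_{m+1})$, so $\Delta_{m+1}\le\Delta_m$. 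Equivalently, $m\mapsto \Hent(R_m)$ is concave.

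\emph{Step 2 (limit of increments).} A non-increasing sequence has a limit $\Delta_\infty\ge 0$, and by Cesàro averaging $\frac{1}{n}\Hent(R_n)=\frac1n\bigl(\Hent(R_0)+\sum_{m<n}\Delta_m\bigr)\to\Delta_\infty$. Hence $\Delta_\infty = h$ and $\Delta_m\ge h$ for every $m$. This also justifies that the limit defining $h$ exists.

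\emph{Step 3.} Summing gives $\Hent(R_{2n})-\Hent(R_n)=\sum_{m=n}^{2n-1}\Delta_m\ge nh\ge \frac{n}{2d^2}$, using \Pref{prop:non-uni-ent-bound} in the non-unimodular case.

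The only delicate point is Step 1 in the graph setting. The identity $\Hent(R_{m+1}\mid R_1=v)=\Hent(R_m)$ needs an automorphism taking $o$ to $v$, which maps the walk from $v$ to the walk from $o$. Vertex-transitivity supplies this, so unimodularity is not needed.
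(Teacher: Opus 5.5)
Your proposal is correct and follows the paper's proof essentially step for step. Like the paper, you write each increment as the mutual information $\II(R_1;R_{m+1})$ using vertex-transitivity, get monotonicity from data processing, identify the limit of the increments with $h$ by averaging, and sum $n$ increments to get $nh\ge\frac{n}{2d^2}$ via \Pref{prop:non-uni-ent-bound}. Your Ces\`aro step is slightly cleaner than the paper's, which writes $\frac{1}{n}\Hent(R_n)$ where $\Hent(R_n)$ is meant.
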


\begin{proof}
We first note that $\Hent(R_n)-\Hent(R_{n-1}) = \Hent(R_n) - \Hent(R_n|R_1) = \II(R_n;R_1)$, where the first equality follows from the vertex-transitivity of $\Gamma$. By the data processing inequality, we have $\Hent(R_n) - \Hent(R_{n-1}) \ge \Hent(R_{n+1}) - \Hent(R_n)\ge 0$ for every $n\ge 1$. Therefore the sequence $\Hent(R_n) - \Hent(R_{n-1})$ converges, and its limit must be~$h$ since $\frac{1}{n}\Hent(R_n) = \sum_{i=1}^n(\Hent(R_i)-\Hent(R_{i-1})$. Finally,
\[
\Hent(R_{2n}) - \Hent(R_n) = \sum_{i=n+1}^{2n}(\Hent(R_i) - \Hent(R_{i-1})) \ge nh,
\]
so the corollary follows by \Pref{prop:non-uni-ent-bound}.
\end{proof}

We are ready to prove the non-unimodular case of \Tref{thm:main-graphs}.

\begin{proof}[Proof of \Tref{thm:main-graphs}, non-unimodular case]
Assume that $\Gamma$ is not unimodular. We may take $n_0\ge 4d^2\log K$. In this case, by \Cref{cor:non-uni-diffent-bound}, $\Hent(R_{2n}) - \Hent(R_n) \ge 2\log K$ for every $n\ge n_0$, so the assumption does not hold for any non-unimodular graph. Therefore the theorem is vacuously true for non-unimodular graphs.
\end{proof}

\bibliographystyle{plain}
\bibliography{refs}

\end{document}